\documentclass[12pt]{amsart}

\usepackage{amsfonts, amsmath, amsthm, empheq, amssymb, bm, color,physics,enumerate,verbatim,comment}
\allowdisplaybreaks[4]

\textwidth 160mm \textheight 215mm \oddsidemargin 0mm

\baselineskip 14pt
\setlength{\oddsidemargin}{0.0in}
\setlength{\evensidemargin}{0.0in}
\setlength{\textwidth}{6.5in}
\setlength{\topmargin}{0.0in}
\setlength{\textheight}{8.5in}

\newtheorem{lem}{Lemma}[section]
\newtheorem{thm}{Theorem}[section]

\newtheorem{cor}{Corollary}[section]
\newtheorem{rem}{Remark}[section]

\def\R{\mathbb R}

\def\De{\Delta}
\def\al{\alpha}
\def\be{\beta}

\def\ep{\epsilon}
\def\ka{\kappa}

\def\f{\frac}

\def\Ga{\Gamma}
\def\Schr{Schr\"odinger}

\def\la{\lambda}

\def\na{\nabla}

\def\Om{\Omega}
\def\ov{\overline}
\def\pa{\partial}

\def\Im{\mathrm{\,Im\,}}

\newtheorem{theorem}{Theorem}[section]




\newcommand{\C}{\mathbb{C}}

\newcommand{\N}{\mathbb{N}}
\newcommand{\K}{\mathbb{K}}

\newcommand{\cB}{\mathcal B}

\newcommand{\cQ}{\mathcal Q}

\newcommand{\cH}{\mathcal H}

\newcommand{\cI}{\mathcal I}
\newcommand{\cS}{\mathcal S}
\newcommand{\cV}{\mathcal V}
\newcommand{\cG}{\mathcal G}
\newcommand{\cE}{\mathcal E}



\usepackage{times}

\usepackage{epsfig}

\title{Stability analysis of inverse problems for coupled magnetic Schr\"odinger equations}
\author{Mohamed Hamrouni}
\address{University of Sousse, ESSTHS , LAMMDA , Tunisia}
\email{hamrouni.mohamed4@gmail.com}
\author{Moez Khenissi}
\address{University of Sousse, ESSTHS , LAMMDA , Tunisia}
\email{moez.khenissi@essths.u-sousse.tn}
\author{\'Eric Soccorsi}
\address{CPT, Aix Marseille Univ, Universit\'e de Toulon, CNRS, Marseille, France}
\email{eric.soccorsi@univ-amu.fr}
\makeatletter
\def\section{\@startsection{section}{1}%
	\z@{.5\linespacing\@plus.7\linespacing}{.5\linespacing}%
	{\normalfont\bfseries\raggedright}}
\makeatother
\begin{document}
\maketitle

\bigskip
\bigskip
\begin{minipage}{1\textwidth}
\begin{abstract}
	\small We consider the inverse coefficient problem of simultaneously determining the space dependent electromagnetic potential, the zero-th order coupling term and the first order coupling vector of a two-state \Schr\ equation in a bounded domain of $\R^d$, $d \ge 2$, from finitely many partial boundary measurements of the solution. We prove that these $3d+3$ unknown scalar coefficients can be H\"older stably retrieved by $(3d+2)$-times suitably changing the initial condition attached at the system.	\\
		
\noindent \textbf{Keywords:} Inverse problem, stability estimate, two-state magnetic Schr\"{o}dinger equations. \\
	
\noindent \textbf{Mathematics subject classification 2010:} 35R30.
\end{abstract} 
\end{minipage}
\bigskip
\bigskip

\section{Introduction}
\label{sec-multi}
The present article is concerned with the identification of the two magnetic Laplacians and the linear coupling of a two-state quantum system by knowledge of finitely many partial boundary observations of the solution. Namely, 
given $T \in (0,\infty)$ and a bounded domain $\Om$ 
in $\R^d$, $d\in \mathbb{N}=\{1,2,\ldots \}$, with boundary $\Gamma=\partial \Om$, we consider the following initial-boundary value problem (IBVP) in the unknowns $u^\pm(x,t)$, where $x \in \Om$ and $t \in (0,T)$, 
\begin{equation}
	\label{sys:ori}
	\left\{
	\begin{array}{ll}
		-i\partial_t u^+ -\Delta_{A^+} u^++q^+u^+ +\Phi\cdot\nabla u^-+\phi u^-=0&\textrm{in}\ Q=\Om \times (0,T) \\
		-i\partial_t u^- -\Delta_{A^-} u^-+q^-u^- -\Phi\cdot\nabla u^++\phi u^+=0&\textrm{in}\ Q \\
		u^+(\cdot,0)=u^+_0,\ u^-(\cdot,0)=u^-_0&\textrm{in}\ \Om\\
		u^+=g^+,\ u^-=g^-&\textrm{on}\ \Sigma= \Gamma \times (0,T),
	\end{array}
	\right.
\end{equation}
with initial state $u_0^\pm$ and non-homogeneous Dirichlet boundary condition $g^\pm$. Here, $q^\pm : \Om \to \Bbb{C}$ is a complex-valued electric potential and 
\begin{equation}
\label{def-deltaA}
\De_{A^\pm} = (\na + iA^\pm)\!\cdot\! (\na + iA^\pm) = \De + 2iA^\pm\!\cdot\!\na + i(\na\!\cdot\! A^\pm) - \abs{A^\pm}^2
\end{equation}
denotes the magnetic Laplace operator associated with the magnetic vector potential $A^\pm : \Om \to \R^d$.
The coupling between the two \Schr\ equations appearing in \eqref{sys:ori} is linear, the coefficient of the first order term being expressed by the vector potential $\Phi: \Om \to \R^d$, while the one of the zero-th order term is $\phi: \Om \to \Bbb{C}$. We refer the reader to \cite[Section 1]{KRS} for the physical relevance of the IBVP \eqref{sys:ori} for modeling the dynamics of two states quantum systems such as spin-$\frac{1}{2}$ particles, like electrons, subject to time-independent magnetic fields.\\
In this work we examine the stability issue  in the inverse problem of simultaneously determining the electromagnetic potentials $A^\pm$, the electric potentials $q^\pm$ and the coupling terms $(\Phi,\phi)$ from partial Neumann boundary measurements over $(0,T)$ of the solution to \eqref{sys:ori}, obtained by $3d+2$ times suitably changing the initial state $u_0$. 

\subsection{A short bibliography}
There is a wide mathematical literature on inverse coefficient problems for partial differential equations. Here we shall restrict our attention to references dealing with the dynamical magnetic \Schr\ equation and (we refer the reader to, e.g.,  \cite{NSU} for a global uniqueness result in an inverse problem for the magnetic static \Schr\ equation). 
In many of those works the data is the magnetic Dirichlet-to-Neumann (DN) map, which is invariant under gauge transformation of the magnetic 
potential of the \Schr\ equation. Therefore, in general, it is completely hopeless to retrieve the magnetic potential vector from the DN map. Nevertheless, the magnetic DN map does uniquely determine the magnetic field, i.e. the exterior derivative of the magnetic potential (the terminology is inherited from the three dimensional case where the exterior derivative of the magnetic potential vector is generated by its curl), see e.g. \cite{B,BC,KS}.

Notice that infinitely many boundary observations of the solution to the \Schr\ equation are needed to define the magnetic DN map. By contrast it was established in \cite{CS, HKSY} by means of a Carleman estimate that the magnetic potential of the dynamical \Schr\ equation can be stably recovered by a finite number of boundary observations of the solution over the entire time-span. The idea of using a Carleman inequality to recover unknown coefficients appearing in a partial differential equation from boundary data of the solution was first introduced by Bukhgeim and Klibanov in \cite{BK}. Since its inception in 1981 the Bukhgeim-Klibanov (BK) method was successfully applied to parabolic and hyperbolic systems, to the Maxwell equation, to the dynamical \Schr\ equation, and even to coupled systems of partial differential equations. See \cite{K} for a complete review of multidimensional inverse problems solved with this approach.

In the present article the BK method is applied to a system of two coupled magnetic \Schr\ equations. We aim for simultaneous stable determination of the two time independent electromagnetic potentials $(A^\ka,q^\ka)$, $\ka=\pm$, and the pair of coupling terms $(\phi, \Phi)$ appearing in the IBVP \eqref{sys:ori}, through finitely many Neumann data. This is reminiscent of the study carried out in \cite{KRS} where the same inverse problem is considered when $A^+=A^-=0$. Nevertheless it is worth mentioning that extending the results of \cite{KRS} to the magnetic \Schr\ system \eqref{sys:ori} is not straightforward. As a matter of fact, a specifically designed Carleman estimate borrowed from \cite{HKSY}, which is different from the one used in \cite{KRS}, is requested by the magnetic framework of this paper (see Section \ref{sec-com} for more details on this technical issue). 

Further we point out that the inverse problem of determining the linear coupling of two non-magnetic \Schr\ equations was examined in \cite{FY} and that the electric potential of two magnetic \Schr\ equations was stably retrieved by Neumann data in \cite{LT} under the assumption that $A^-=A^+$ is known. But, to the best of our knowledge, there is no reference in the mathematical literature dealing with the inverse problem of determining the electromagnetic potential of a system of two coupled \Schr\ equations, by a finite number of Neumann data.

\subsection{Notations}
Throughout the entire text, $x=(x_1,\ldots,x_d)$ denotes a generic point of $\Om \subset \R^d$. We put $\pa_i = \frac{\pa}{\pa x_i}$ for $i=1,\ldots,d$, $\pa_{ij}^2=\pa_i \pa_j$ for $i,j=1,\ldots,d$, and as usual we write $\pa_i^2$ instead of $\pa_{ii}^2$. Next, for any multi-index  $k=(k_1,k_2,\ldots,k_d) \in  \mathbb{N}_0^d$, where $\mathbb{N}_0=\{ 0 \} \cup \mathbb{N}$, we set $\pa_x^k = \pa_1^{k_1} \pa_2^{k_2} \ldots \pa_d^{k_d}$ and $\abs{k}=\sum_{j=1}^d k_j$.
Similarly, we write $\pa_t = \frac{\pa}{\pa t}$ and $\pa_\nu u = \f{\pa u}{\pa \nu} = \na u \cdot \nu$, where $\nu$ is the outward normal vector to $\Gamma$ and $\nabla$ is the gradient operator with respect to $x$. Here and below the symbol $\cdot$ stands for the Euclidian scalar product in $\R^d$ and $\nabla \cdot$ denotes the divergence operator.

Using the same the notations as in \cite{LM2} we now introduce the following functional spaces. For $X$, a manifold, we set
$$ H^{r,s}(X \times (0,T))=L^2(0,T;H^r(X)) \cap H^s(0,T;L^2(X)),\ r, s \in \left[ 0,\infty \right), $$ 
where $H^r(X)$ (resp., $H^s(0,T)$) denotes the usual Sobolev space of order $r$ (resp., $s$) in $X$ (resp., $(0,T)$), the set $H^0(X)$ (resp. $H^0(0,T)$) being understood as $L^2(X)$ (resp., $L^2(0,T)$)). More specifically, when $X=\Om$ we write
$H^{r,s}(Q)=L^2(0,T;H^r(\Om)) \cap H^s(0,T;L^2(\Om))$ instead of $H^{r,s}(\Om \times (0,T))$, while for $X=\Gamma$ we write
$H^{r,s}(\Sigma)=L^2(0,T;H^r(\Gamma)) \cap H^s(0,T;L^2(\Gamma))$ instead of $H^{r,s}(\Gamma \times (0,T))$.\\
 For further use we recall from \cite[Section 4, Theorem 2.1]{LM2} that for all $u\in H^{r,s}(X\times(0,T))$, $r,\ s>0$, and all $(j,k)\in\mathbb{N}_0^d \times\mathbb{N}_0$ such that $1-\vert j\vert/r-k/s>0$, we have
 \begin{equation}
 \label{n1}
 	\partial^j_x\partial^k_t u\in H^{\mu,\nu}(X\times(0,T))\ \textrm{where}\ \frac{\mu}{r}=\frac{\nu}{s}=1-\frac{\vert j\vert}{r}-\frac{k}{s}
 \end{equation}
 and the estimate
 \begin{equation}
 \label{n2}
 	\norm{\partial^j_x\partial^k_t u}_{H^{\mu,\nu}(X\times(0,T))}\leq\Vert u\Vert_{H^{r,s}(X\times(0,T))}.
 \end{equation}

\subsection{Main results}
We first examine the well-posedness of the IBVP \eqref{sys:ori}.
For this purpose we introduce the following Hamiltonian operator acting on $(C_0^\infty(Q)^\prime)^2$,
\begin{equation}
\label{def-HM}
\cH(A^\pm,q^\pm,\Phi,\phi)= \left( \begin{array}{cc} -\Delta_{A^+}  +q^+ & \Phi \cdot \nabla + \phi \\ -\Phi \cdot \nabla  + \phi & -\Delta_{A^-} + q^-\end{array} \right),
\end{equation}
and we rewrite the IBVP \eqref{sys:ori} as 
\begin{equation}
\label{sys:ori1}
\left\{ \begin{array}{ll} -i \partial_t u + \cH(A^\pm,q^\pm,\Phi,\phi) u = 0 & \mbox{in}\ Q \\ u(\cdot,0)=u_0 & \mbox{in}\ \Om \\ u=g & \mbox{on}\ \Sigma, \end{array} \right. 
\end{equation}
where $u=(u^+,u^-)^T$ is the transpose to $(u^+,u^-)$, $u_0=(u_0^+,u_0^-)^T$ and $g=(g^+,g^-)^T$.
Then the existence and uniqueness result for \eqref{sys:ori} that we have in mind can be stated as follows.

\begin{theorem}
\label{thm-eu}
Fix $m \in \N$ and assume that $\Gamma$ is $\mathcal{C}^{2m}$. Let $A^{\pm} \in W^{2m+1,\infty}(\Om,\R^d)$, let $q^\pm \in W^{2m,\infty}(\Om,\C)$, let $\Phi \in W^{2m,\infty}(\Om,\R^d)$ satisfy
\begin{equation}
\label{divfree}
\nabla \cdot \Phi(x)=0,\ x \in \Om,
\end{equation} 
and let
$\phi \in W^{2m,\infty}(\Om,\C)$ be such that 
$$
\norm{A^\pm}_{W^{2m+1,\infty}(\Om)^d}+ \norm{q^\pm}_{W^{2m,\infty}(\Om)}+\norm{\Phi}_{W^{2m,\infty}(\Om)^d}+ \norm{\phi}_{W^{2m,\infty}(\Om)} \leq M,
$$
for some {\it a priori} fixed positive constant $M$. \\
Then, for all
$g=(g^+,g^-)^T \in H^{2(m+\frac{3}{4}),m+\frac{3}{4}}(\Sigma)^2$ and all $u_0=(u_0^+,u_0^-)^T \in H^{2m+1}(\Om)^2$ fulfilling the following compatibility conditions
\begin{equation}
\label{d4}
\partial_t^\ell g(\cdot,0)=( -i )^\ell \left[\cH(A^\pm,q^\pm,\Phi,\phi)\right]^\ell u_0\ \mbox{on}\ \Gamma,\ \ell=0,\cdots,m-1,
\end{equation}
there exists a unique solution 
$$u=(u^+,u^-)^T \in \bigcap_{\ell=0}^{m} H^{m-\ell}(0,T;H^{2 \ell}(\Om)^2)$$ 
to the IBVP \eqref{sys:ori}. Moreover, we have
\begin{equation}
\label{d4c}
\sum_{\ell=0}^{m} \norm{u}_{H^{m-\ell}(0,T;H^{2 \ell}(\Om)^2)} \leq C \left( \norm{u_0}_{H^{2m+1}(\Om)^2}+ \norm{g}_{H^{2(m+\frac{3}{4}),m+\frac{3}{4}}(\Sigma)^2}\right),
\end{equation}
where $C$ is a positive constant depending only on $\Om$, $T$ and $M$.
\end{theorem}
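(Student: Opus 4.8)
The plan is to place the IBVP \eqref{sys:ori1} in the semigroup framework of a self-adjoint principal part perturbed by bounded terms, first reducing to homogeneous boundary data and then climbing the regularity scale by differentiating in time and invoking elliptic regularity. First I would reduce to zero boundary data: using the trace theory of \cite{LM2} together with the assumption $g \in H^{2(m+\frac34),m+\frac34}(\Sigma)^2$, I would construct a lifting $G \in H^{2(m+1),m+1}(Q)^2 \subset \bigcap_{\ell=0}^m H^{m-\ell}(0,T;H^{2\ell}(\Om)^2)$ with $G|_\Sigma=g$ and $\norm{G}\lesssim\norm{g}_{H^{2(m+\frac34),m+\frac34}(\Sigma)^2}$. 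Setting $v=u-G$, the unknown $v$ solves $-i\partial_t v + \cH v = F$ in $Q$ with $F=(i\partial_t-\cH)G$, homogeneous Dirichlet condition, and initial datum $v_0=u_0-G(\cdot,0)\in H^{2m+1}(\Om)^2$; the compatibility conditions \eqref{d4} are precisely what force the relevant time-derivatives of $v$ to vanish on $\Gamma$ at $t=0$.

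The heart of the matter is the $L^2$ well-posedness of the reduced problem. I would split $\cH=\cH_0+\cB$, where $\cH_0=\mathrm{diag}(-\Delta_{A^+},-\Delta_{A^-})+\left(\begin{array}{cc} 0 & \Phi\cdot\nabla \\ -\Phi\cdot\nabla & 0\end{array}\right)$ gathers the principal part and the first-order coupling, while $\cB=\mathrm{diag}(q^+,q^-)+\left(\begin{array}{cc} 0 & \phi \\ \phi & 0\end{array}\right)$ gathers the zeroth-order terms. The key structural observation is that $\cH_0$, endowed with domain $(H^2(\Om)\cap H_0^1(\Om))^2$, is self-adjoint: each $-\Delta_{A^\pm}$ is self-adjoint and nonnegative for real-valued $A^\pm$, and the divergence-free condition \eqref{divfree} yields $(\Phi\cdot\nabla)^\ast=-\Phi\cdot\nabla$, so the off-diagonal first-order block is symmetric and relatively bounded with relative bound $0$; Kato--Rellich then gives self-adjointness of $\cH_0$. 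Hence $-i\cH_0$ generates a unitary $C_0$-group by Stone's theorem, and since $\cB\in\mathcal{L}(L^2(\Om)^2)$ is bounded, the bounded-perturbation theorem shows that $-i\cH$ generates a $C_0$-group on $L^2(\Om)^2$. This yields existence, uniqueness, and the energy estimate $\norm{v(t)}_{L^2}\lesssim\norm{v_0}_{L^2}+\int_0^t\norm{F(s)}_{L^2}\,\mathrm ds$ for the reduced problem with $L^2$ data.

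Next I would bootstrap the regularity. Since $\partial_t v=-i\cH v+iF$, the function $\partial_t v$ formally solves the same system with source $\partial_t F$ and datum $\partial_t v(\cdot,0)=-i\cH v_0+iF(\cdot,0)$; the compatibility conditions \eqref{d4} are exactly what make each $\partial_t^\ell v(\cdot,0)$ a legitimate $L^2$ datum compatible with the homogeneous boundary condition, so iterating the $C_0$-group estimate $m$ times gives $v\in\bigcap_{\ell=0}^{m}H^{\ell}(0,T;L^2(\Om)^2)$, in particular $\partial_t^j v\in L^2(0,T;L^2(\Om)^2)$ for $j\le m$. Spatial regularity is then recovered from the equation written as $\cH v=i\partial_t v$: because $\cH$ is a strongly elliptic $2\times2$ system with Dirichlet conditions, the $\mathcal{C}^{2m}$ regularity of $\Gamma$ and the $W^{2m+1,\infty}$ / $W^{2m,\infty}$ bounds on the coefficients furnish the elliptic estimate $\norm{v}_{H^{2\ell}}\lesssim\sum_{j\le\ell}\norm{\partial_t^j v}_{L^2}+\norm{v}_{L^2}$, which upon iteration in $\ell$ yields $v\in\bigcap_{\ell=0}^m H^{m-\ell}(0,T;H^{2\ell}(\Om)^2)$ with the corresponding norm bound. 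Returning to $u=v+G$ and combining the bounds on $v$ with the lifting estimate on $G$ produces \eqref{d4c}.

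The main obstacle I anticipate is twofold. First is the rigorous justification of the time-differentiation and bootstrap, so that each differentiated problem carries admissible data matching the homogeneous boundary condition; this is precisely where the compatibility conditions \eqref{d4} and the extra $\tfrac34$ of boundary regularity are consumed, and where the Duhamel contribution of the source $F$ (inherited from the lifting of $g$) must be shown to remain in the correct anisotropic space. Second is the elliptic regularity for the coupled system, where one must check that the first-order magnetic terms $2iA^\pm\!\cdot\!\nabla$ and the first-order coupling $\pm\Phi\cdot\nabla$ — kept under control by \eqref{divfree} and the $W^{2m+1,\infty}$ bounds — do not spoil the Agmon--Douglis--Nirenberg estimates up to order $2m$; this is the step that genuinely differs from the non-magnetic case of \cite{KRS} and motivates the higher coefficient regularity required here.
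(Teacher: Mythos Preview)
Your proposal is correct and follows essentially the same route as the paper: lift $g$ to reduce to homogeneous boundary data, establish self-adjointness of the principal part $\cH_0$ via Kato--Rellich using \eqref{divfree}, apply semigroup theory for the base well-posedness, and then bootstrap by time differentiation together with elliptic regularity. The paper differs only organizationally: it lifts both $g$ and $u_0$ simultaneously (so the reduced initial datum vanishes and \cite[Lemma 2.1]{CKS} applies directly), runs the bootstrap as an induction on $m$ rather than differentiating $m$ times at once, and carries out the elliptic step by rewriting the equation as the scalar BVP $-\Delta u=\psi$ with all magnetic and coupling terms moved to the right-hand side, thereby sidestepping the ADN machinery for the coupled system that you flag as your second obstacle.
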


It can be checked from the proof of Theorem \ref{thm-eu} displayed in Section \ref{sec-pr-eu} below, that the result of Theorem \ref{thm-eu} is still valid upon weakening the assumption $A^{\pm} \in W^{2m+1,\infty}(\Om,\R^d)$ by the following one:
$$ A^{\pm} \in W_{\div}^{2m,\infty}(\Om,\R^d)=\{ u \in W^{2m,\infty}(\Om,\R^d),\ \na \cdot u \in W^{2m,\infty}(\Om,\R) \}. $$
Nevertheless, for the sake of simplicity we stick with the above statement of Theorem \ref{thm-eu} in the remaining part of this text.

Even though the above statement looks quite similar to 
\cite[Proposition 1.1]{KRS} it is worth mentioning that Theorem \ref{thm-eu} cannot be deduced from it since $A^\pm=0$ in \cite{KRS}. Moreover we point out that the regularity and the fixed boundary values imposed on the admissible unknown coefficients by the main result of this article (see Theorem \ref{thm-main} below) are directly requested by the application of Theorem \ref{thm-eu} and namely by the compatibility conditions \eqref{d4}. Furthermore the estimate \eqref{d4c} is needed by the proof of Corollary \ref{cor1} below, which establishes that the solution to the IBVP \eqref{sys:ori} is sufficiently smooth for applying the BK method in Section \ref{sec-IP}. For all these reasons and for the convenience of the reader, we give a detailed proof of Theorem \ref{thm-eu} in Section \ref{sec-eu}.

Further, setting
\begin{equation}
\label{def-N}
N_d= \lfloor \frac{d+2}{4} \rfloor+ 3,
\end{equation}
where $\lfloor x \rfloor$ denotes the greatest integer less than or equal to $x \in \R$, we have the following useful byproduct of Theorem \ref{thm-eu}.

\begin{cor}
\label{cor1}
Under the conditions of Theorem \ref{thm-eu} with $m=N_d$, the solution $u$ to \eqref{sys:ori} lies within the class $W^{1,\infty}(0,T;W^{1,\infty}(\Om)^2)$. Moreover, there exists a positive constant $C$ depending only on $\Om$, $T$, $M$, $u_0$ and $g$, such that
\begin{equation}
\label{e00}	
\norm{u}_{W^{1,\infty}(0,T;W^{1,\infty}(\Om)^2)} \leq C.
\end{equation}
\end{cor}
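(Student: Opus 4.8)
The plan is to derive the $W^{1,\infty}(0,T;W^{1,\infty}(\Om)^2)$ bound purely from the regularity furnished by Theorem \ref{thm-eu} with $m=N_d$, by combining the derivative rule \eqref{n1}--\eqref{n2} with an anisotropic Sobolev embedding into $L^\infty(Q)$. Working componentwise, it is enough to control $u$, $\pa_{x_i} u$, $\pa_t u$ and $\pa_t \pa_{x_i} u$ (for $i=1,\dots,d$) in $L^\infty(Q)$, since $\norm{u}_{W^{1,\infty}(0,T;W^{1,\infty}(\Om)^2)}$ is equivalent to the sum of the $L^\infty(Q)$-norms of the derivatives $\pa_x^j\pa_t^k u$ with $\abs{j}\le 1$ and $k\le 1$. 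The starting point is the observation that the two extreme indices $\ell=0$ and $\ell=m$ in the solution class of Theorem \ref{thm-eu} give $u\in H^m(0,T;L^2(\Om)^2)\cap L^2(0,T;H^{2m}(\Om)^2)=H^{2m,m}(Q)^2$, whose norm is bounded by the right-hand side of \eqref{d4c}.

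Next I would apply \eqref{n1}--\eqref{n2} with $r=2m$ and $s=m$ to each of the four derivatives above. For a multi-index $(j,k)$ with $\abs{j}\le 1$ and $k\le 1$, the requirement $1-\abs{j}/(2m)-k/m>0$ holds for $m=N_d$, and \eqref{n1}--\eqref{n2} then yield $\pa_x^j\pa_t^k u\in H^{\mu,\nu}(Q)^2$ with $\mu=2m-\abs{j}-2k$ and $\nu=m-\abs{j}/2-k$, together with $\norm{\pa_x^j\pa_t^k u}_{H^{\mu,\nu}(Q)^2}\le\norm{u}_{H^{2m,m}(Q)^2}$. It remains to embed $H^{\mu,\nu}(Q)$ into $L^\infty(Q)$. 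Here I would invoke the standard anisotropic embedding argument: for $\te\in(0,1)$, interpolation gives $H^{\mu,\nu}(Q)\hookrightarrow H^{\te\nu}(0,T;H^{(1-\te)\mu}(\Om))$, and the two one-variable Sobolev embeddings $H^{\te\nu}(0,T)\hookrightarrow C([0,T])$ (valid for $\te\nu>1/2$) and $H^{(1-\te)\mu}(\Om)\hookrightarrow C(\overline\Om)\subset L^\infty(\Om)$ (valid for $(1-\te)\mu>d/2$, using that $\Ga$ is smooth enough) produce $H^{\mu,\nu}(Q)\hookrightarrow C(\overline Q)\subset L^\infty(Q)$ as soon as such a $\te$ exists, that is, as soon as $\frac{d}{2\mu}+\frac{1}{2\nu}<1$.

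The binding case is $(\abs{j},k)=(1,1)$, for which $\mu=2m-3$ and $\nu=m-3/2$, so that $\frac{d}{2\mu}+\frac{1}{2\nu}=\frac{d+2}{2(2m-3)}$; this is $<1$ precisely when $m>\frac{d+8}{4}$, while the remaining three cases $(0,0),(1,0),(0,1)$ impose the weaker thresholds $m>\frac{d+2}{4}$, $m>\frac{d+4}{4}$ and $m>\frac{d+6}{4}$. I would then check that $m=N_d=\lfloor\frac{d+2}{4}\rfloor+3$ meets the strongest requirement: since $\lfloor\frac{d+2}{4}\rfloor>\frac{d+2}{4}-1=\frac{d-2}{4}$, we obtain $N_d>\frac{d-2}{4}+3=\frac{d+10}{4}>\frac{d+8}{4}$, with room to spare (this slack explains the ``$+3$'' in \eqref{def-N}). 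Combining the four embeddings with \eqref{n2} gives $\norm{u}_{W^{1,\infty}(0,T;W^{1,\infty}(\Om)^2)}\le C\norm{u}_{H^{2m,m}(Q)^2}$, and bounding the right-hand side via \eqref{d4c} yields \eqref{e00} with $C$ depending only on $\Om$, $T$, $M$, $u_0$ and $g$.

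The only genuinely delicate point is the bookkeeping of the anisotropic embedding: correctly tracking the exponents $(\mu,\nu)$ through \eqref{n1} and verifying the scaling condition $\frac{d}{2\mu}+\frac{1}{2\nu}<1$ in the worst case $(\abs{j},k)=(1,1)$. Once that inequality is reduced to $m>\frac{d+8}{4}$, checking that $N_d$ is large enough is immediate, and everything else is a routine chaining of continuous embeddings together with the a priori estimate \eqref{d4c}.
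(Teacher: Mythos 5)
Your proof is correct, but it takes a genuinely different (and somewhat more elaborate) route than the paper's. The paper simply selects the single intermediate index $\ell=N_d-2$ in the regularity class of Theorem \ref{thm-eu}, which gives $u\in H^2(0,T;H^{2(N_d-2)}(\Om)^2)$, and then applies the one-dimensional embedding $H^2(0,T)\hookrightarrow C^1([0,T])$ together with $H^{2(N_d-2)}(\Om)\hookrightarrow W^{1,\infty}(\Om)$, the latter holding because $2(N_d-2)>d/2+1$; the estimate \eqref{e00} then follows from \eqref{d4c} in one line. You instead retain only the two extreme indices $\ell=0$ and $\ell=N_d$, i.e. $u\in H^{2m,m}(Q)^2$, push each derivative $\pa_x^j\pa_t^k u$ (with $\abs{j}\le1$, $k\le1$) into $H^{\mu,\nu}(Q)$ via \eqref{n1}--\eqref{n2}, and conclude with the anisotropic embedding $H^{\mu,\nu}(Q)\hookrightarrow L^\infty(Q)$ under the condition $\frac{d}{2\mu}+\frac{1}{2\nu}<1$. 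Your exponent bookkeeping is right: in the binding case $(\abs{j},k)=(1,1)$ one has $\mu=2m-3$ and $\nu=m-\frac{3}{2}$, giving the requirement $m>\frac{d+8}{4}$, and indeed $N_d>\frac{d+10}{4}>\frac{d+8}{4}$. What your approach buys is a marginally weaker threshold on $m$ (namely $m>\frac{d+8}{4}$, versus the paper's implicit $m>\frac{d+10}{4}$) and the fact that it uses only the anisotropic space $H^{2m,m}(Q)$ rather than an intermediate Bochner space from the full chain; what it costs is the extra interpolation step $H^{\mu,\nu}(Q)\hookrightarrow H^{\theta\nu}(0,T;H^{(1-\theta)\mu}(\Om))$, which is standard (it is in Lions--Magenes, close to the result the paper already quotes for \eqref{n1}--\eqref{n2}) but should be cited explicitly, whereas the paper's argument needs nothing beyond the classical Sobolev embedding theorem.
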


This being said we turn now to stating the main result of this article, which focuses on the stability issue in the identification of the Hamiltonian $\cH(A^\pm,q^\pm,\Phi,\phi)$ by Neumann data. For this purpose we pick $M \in (0,\infty)$ and fix $m \in \{3,4,\ldots \}$. Then, given $q_0 \in W^{m,\infty}(\Om,\K)$, $\K=\R$ or $\C$, we introduce the set of admissible $\K$-valued $m$-regular scalar potentials as
\begin{align}
\label{pco}
\cQ_M^m(q_0,\K)
= & \left\{q \in W^{m,\infty}(\Om,\K):\ \norm{q}_{W^{m,\infty}(\Om)} \leq M\ \mbox{and}\ \partial_x^k q = \partial_x^k q_0^\pm\ \mbox{on}\ \Gamma,\ k=0,\ldots, m-3 \right\}.
\end{align}
Next, for $q_j \in \cQ_M^m(q_0,\K)$, $j=1,2$, we say that $(q_1,q_2) \in \digamma(\cQ_M^m(q_0,\K))$ whenever
$$ \abs{\na q_1(x)- \na q_2(x)} \le M \abs{q_1(x)-q_2(x)},\ x \in \Om. $$

Similarly, for $V_0 \in W^{m,\infty}(\Om,\R^d)$, the set of real-valued $m$-regular vector potentials is defined by
\begin{align}
\label{aco}
\cV_M^m(V_0) = & \left\{V \in W^{m,\infty}(\Om,\R^d):\ \norm{V}_{W^{m,\infty}(\Om)^d}\leq M\ \mbox{and}\ \partial_x^k V = \partial_x^k V_0\ \mbox{on}\ \Gamma,\ 0 \le \abs{k} \le m-3\ 
	 \right\}.
\end{align}
In the special case where $\nabla \cdot V_0=0$ in $\Om$, we denote by $\tilde{\cV}_M^m(V_0)$ the set of divergence free vector potentials in $\cV_M^m(V_0)$, i.e.
$$ \tilde{\cV}_M^m(V_0) = \left\{ V \in \cV_M^m(V_0):\ \na \cdot V =0\ \mbox{in}\ \Om \right\}. $$
Further, given $V_1$ and $V_2$ in $\cV_M^m(V_0)$, we write $(V_1,V_2) \in \digamma(\cV_M^m(V_0))$ if
\begin{align*} 
& \abs{\nabla ( \nabla \cdot (V_1-V_2))(x)} + \max_{i=1,\ldots,d} \sum_{j=1}^d \abs{\partial_i (V_1-V_2)_j(x)} \\
\le &  M \left(\abs{(V_1-V_2)(x)}+\abs{\nabla.(V_1-V_2)(x)}\right),\ x \in \Om, 
\end{align*}
where $(V_1-V_2)_j$, $j=1,\ldots,d$, denotes the $j$-th component of $V_1-V_2$. Likewise, for $V_1 \in \tilde{\cV}_M^m(V_0)$ and $V_2 \in \tilde{\cV}_M^m(V_0)$, we say that $(V_1,V_2) \in \digamma(\tilde{\cV}_M^m(V_0))$ when
$$ \max_{i=1,\ldots,d} \sum_{j=1}^d \abs{\partial_i (V_1-V_2)_j(x)}  \le  M \abs{(V_1-V_2)(x)},\ x \in \Om. $$
We point out that there exists
actual examples of classes of electromagnetic potentials $(A_1^\pm,A_2^\pm) \in \digamma(\tilde{\cV}_M(A_0^\pm,\R^d))$, $(q_1^\pm,q_2^\pm) \in \digamma(\cQ_M(q_0^\pm,\C))$ and coupling terms $(\Phi_1,\Phi_2)\in \digamma(\tilde{\cV}_M(\Phi_0,\R^d))$,  $(\phi_1,\phi_2)\in \digamma(\cQ_M(\phi_0,\C))$, where $A_0^\pm$, $q_0^\pm$, $\Phi_0$ and $\phi_0$ are as in Theorem \ref{thm-main}. Such examples can be built for instance by adapting the ideas of \cite[Remark, Point d)]{HKSY}. 

The main result of our article can be stated as follows.

\begin{theorem}
\label{thm-main}
Assume that $\Gamma \in \mathcal{C}^{2N_d}$ where $N_d$ is defined by \eqref{def-N}, let $A_0^\pm \in W^{2N_d+1,\infty}(\Om,\R^d)$, let $\Phi_0 \in W^{2N_d,\infty}(\Om,\R^d)$ satisfy the condition \eqref{divfree}, let $q_0^\pm \in W^{2N_d,\infty}(\Om,\R)$ and let $\phi_0 \in W^{2N_d,\infty}(\Om,\C)$. Then there exist a sub-boundary $\Gamma_0 \subset \Gamma$ and a set of $3d+2$ initial states $u_0^k=(u_0^{+,k},u_0^{-,k})^T \in H^{2N_d+1}(\Om)^2$ and boundary conditions $g^k=(g^{+,k},g^{-,k})^T \in H^{2(N_d+3 \slash 4), N_d + 3 \slash 4}(\Sigma)^2$, $k=1,\ldots, 3d+2$, fulfilling the compatibility conditions \eqref{d4} with $m=N_d$, such for all $(A_1^\pm,A_2^\pm)\in \digamma(\cV_M(A_0^\pm,\R^d))$, all $(\Phi_1,\Phi_2)\in \digamma(\tilde{\cV}_M(\Phi_0,\R^d))$, all
$(q_1^\pm,q_2^\pm) \in \digamma(\tilde{\cQ}_M(q_0,\R))$ and all $(\phi_1,\phi_2)\in \digamma(\cQ_M(\phi_0,\C))$, we have
\begin{align}
\label{se} 
&\sum_{\ka=+,-} \left( \norm{A_1^\ka-A_2^\ka}^2_{L^2(\Om)}
+\norm{\nabla.(A_1^\ka-A_2^\ka)}^2_{L^2(\Om)}
+ \norm{q_1^\ka -q_2^\ka}^2_{L^2(\Om)} \right) \\
& +\norm{\Phi_1-\Phi_2}^2_{L^2(\Om)}+\norm{\phi_1-\phi_2}^2_{L^2(\Om)} \nonumber \\
& \leq  C \sum_{k=1}^{3d+2} \sum_{\ka=+,-} \norm{\partial_\nu \partial_t u_1^{\ka,k}- \partial_\nu \partial_t u_2^{\ka,k}}^2_{L^2(\Sigma_0)}. \nonumber
\end{align}
Here, $\Sigma_0=\Gamma_0\times(0,T)$, $C$ is a positive constant depending only on $\omega$, $T$, $M$ and $(u_0^{\pm,k},g^{\pm,k})$, $k=1,\ldots,3d+2$,  and $u_j^k=(u_j^{+,k},u_j^{-,k})^T$, for $j=1,2$, is the solution to \eqref{sys:ori} given by Theorem \ref{thm-eu} where $(A_j^\pm,\Phi_j,\phi_j,q_j^\pm,u_0^{\pm,k},g^{\pm,k})$ is substituted for $(A^\pm,\Phi,\phi,q^\pm,u_0^{\pm},g^{\pm})$. 
\end{theorem}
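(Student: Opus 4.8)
The plan is to run the Bukhgeim--Klibanov (BK) scheme on the coupled magnetic system, with the Carleman estimate of \cite{HKSY} as the analytic engine. First I would fix two admissible tuples of coefficients, denote by $u_j=(u_j^+,u_j^-)^T$, $j=1,2$, the corresponding solutions supplied by Theorem~\ref{thm-eu} with $m=N_d$, and set $w=u_1-u_2$. Subtracting the two copies of \eqref{sys:ori} and inserting $u_1=u_2+w$ shows that $w$ solves the system driven by the single operator $\cH(A_1^\pm,q_1^\pm,\Phi_1,\phi_1)$ with a right-hand side $F=(F^+,F^-)^T$ that is \emph{linear} in the coefficient differences $a^\pm=A_1^\pm-A_2^\pm$, $\tilde q^\pm=q_1^\pm-q_2^\pm$, $\psi=\Phi_1-\Phi_2$ and $\chi=\phi_1-\phi_2$. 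Indeed, using \eqref{def-deltaA}, each $F^\ka$ is a combination of the first-order term $a^\ka\cdot\na u_2^\ka$, the zeroth-order terms $(\na\cdot a^\ka)u_2^\ka$, $(a^\ka\cdot(A_1^\ka+A_2^\ka))u_2^\ka$ and $\tilde q^\ka u_2^\ka$, and the coupling contributions $\psi\cdot\na u_2^{\mp}$ and $\chi\,u_2^{\mp}$ from the opposite state. Since $w(\cdot,0)\equiv0$ in $\Om$, all its spatial derivatives vanish at $t=0$, so evaluating the $w$-system there gives $\pa_t w(\cdot,0)=iF_0$, where $F_0:=F(\cdot,0)$ is the \emph{same} linear expression evaluated at the known initial state $u_0$ and its gradient.

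Because the coefficients do not depend on time, $z:=\pa_t w$ solves the same system with source $\pa_t F$, and here I would invoke Corollary~\ref{cor1}: the bound \eqref{e00} makes $u_2\in W^{1,\infty}(0,T;W^{1,\infty}(\Om)^2)$, so $\pa_t F$ (and, for the magnetic version of the estimate, one further spatial derivative of it) is controlled in time by the coefficient differences and their first derivatives. The $\digamma$-class hypotheses enter precisely here: they bound $|\na a^\ka|$, $|\na\psi|$ and $|\na\tilde q^\ka|$ by $|a^\ka|+|\na\cdot a^\ka|$, $|\psi|$ and $|\tilde q^\ka|$, which is exactly what lets the stronger source norm required by the first-order (magnetic) Carleman estimate of \cite{HKSY} be re-expressed through the quantities on the left-hand side of \eqref{se}. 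I would then extend $w$, and hence $z$, to $\Om\times(-T,T)$ by time reflection, exploiting the reversibility of the \Schr\ flow so that $t=0$ becomes an interior time, and apply the Carleman estimate with a large parameter $s$ and a weight peaked at $t=0$; the relevant sub-boundary $\Gamma_0\subset\Gamma$ is the one singled out by the sign of the normal derivative of the spatial weight, and since $w=0$ on $\Sigma$ the only surviving boundary term is the Neumann trace $\pa_\nu z$ on $\Gamma_0$.

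The usual BK integration in time then produces, for $s$ large and after summing over the $3d+2$ data (indexed by $k$), an inequality of the form
\begin{equation*}
\sum_{k=1}^{3d+2}\norm{F_0^k}_{L^2(\Om)^2}^2\le C\sum_{k=1}^{3d+2}\sum_{\ka=+,-}\norm{\pa_\nu z^{\ka,k}}_{L^2(\Sigma_0)}^2+\frac{C}{s}\,\cE,
\end{equation*}
where $z^{\ka,k}=\pa_t(u_1^{\ka,k}-u_2^{\ka,k})$, $\cE$ denotes the left-hand side of \eqref{se}, and the $s^{-1}$-term comes from estimating $\pa_t F$. It remains to turn the left-hand side into a lower bound for $\cE$, and this is the role of the initial states. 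I would choose $u_0^k=(u_0^{+,k},u_0^{-,k})^T$, $k=1,\dots,3d+2$, compatible with \eqref{d4} for $m=N_d$, so that at every point of $\ov\Om$ the $2(3d+2)$ linear forms $F_0^{\ka,k}$ in the unknowns $a^\pm$, $\na\cdot a^\pm$, $\tilde q^\pm$, $\psi$ and $\chi$ admit a left inverse with norm bounded uniformly over $\ov\Om$; such states exist by adapting \cite[Remark, Point d)]{HKSY}, and $3d+2$ is the minimal number of data for which these forms can be made to determine all the unknowns pointwise. Squaring, summing over $k$ and inverting pointwise gives $\cE\le C\sum_k\norm{F_0^k}_{L^2(\Om)^2}^2$ after integration over $\Om$; inserting this into the displayed inequality and absorbing $\tfrac{C}{s}\cE$ into the left-hand side for $s$ large yields \eqref{se}.

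The main obstacle is the first-order (magnetic) nature of the unknowns $a^\pm$ and $\psi$: in contrast with the zeroth-order problem of \cite{KRS}, the source $F$ involves $\na u_2$, so a plain $L^2$ Carleman estimate does not close and one must invoke the \cite{HKSY} estimate that also controls a spatial derivative of the source — this is what simultaneously forces the higher regularity of Corollary~\ref{cor1} and the $\digamma$-class constraints bounding $\na a^\pm$, $\na\psi$ and $\na\tilde q^\pm$. A secondary difficulty is the coupled, simultaneous recovery: the selection of the $3d+2$ initial states must render a single matrix-valued function invertible on all of $\ov\Om$ while respecting the compatibility conditions \eqref{d4}, and one must keep $\na\cdot a^\pm$ among the unknowns of the pointwise linear system — which is precisely why the terms $\norm{\na\cdot(A_1^\ka-A_2^\ka)}_{L^2(\Om)}^2$ appear on the left of \eqref{se}.
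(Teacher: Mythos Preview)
Your overall BK strategy is right, but there is a genuine gap at the Carleman step. You propose to extend $w$ and $z=\pa_t w$ to $\Om\times(-T,T)$ by time reflection ``exploiting the reversibility of the \Schr\ flow''. For the magnetic operator this does not work when $A_j^\pm$ is time-independent and non-zero: if $u$ solves $-i\pa_t u-\De_{A}u+qu=0$ then $\tilde u(x,t)=\overline{u(x,-t)}$ solves $-i\pa_t\tilde u-\De_{-A}\tilde u+\overline q\,\tilde u=0$, so the reflected function satisfies the equation with the \emph{opposite} magnetic potential and the glued function is not a solution of a single \Schr\ equation on $(-T,T)$. The paper flags this explicitly in Section~\ref{sec-com}: the $(-T,T)$ technique of \cite{KRS} works only for zero or odd-in-time magnetic potentials. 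Consequently one cannot make $t=0$ an interior time, and the usual Carleman inequality on the extended cylinder is unavailable.

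The paper's fix is precisely the point you are missing. It applies the Carleman estimate of Theorem~\ref{thm:CE} (from \cite{HKSY}) on $\Om\times(0,T)$ itself; the price is an extra \emph{initial} term $s\,\cI(v_0^\pm)$ on the right-hand side, see \eqref{CE}--\eqref{defI}. That term involves $\nabla v_0^\pm$, hence first derivatives of the coefficient differences $a^\pm,\psi,\tilde q^\pm,\chi$, and this is exactly where the $\digamma$-class hypotheses enter: Lemma~\ref{lm0} shows $\sum_\ka\cI(v_0^\ka)\le C\,\mathfrak h_s(A^\pm,q^\pm,\Phi,\phi)$ by bounding $\mathbb J_{a^\pm}$, $\na(\na\cdot a^\pm)$, $\mathbb J_\psi$, $\na\tilde q^\pm$, $\na\chi$ through the $\digamma$ conditions. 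So the $\digamma$ assumptions are not used to control a ``stronger source norm'' as you write---the source term $\pa_t F$ is handled by a plain $L^2$ Carleman estimate and \eqref{e00}---but to absorb the initial boundary contribution created by working on $(0,T)$.

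A second, smaller gap concerns the choice of the $3d+2$ initial states. Your abstract ``left inverse'' argument is not enough with only $3d+2$ data unless you exploit real-valuedness. The paper proceeds in three structured steps (Section~\ref{sec-eop}): two constant states $(1,0)$ and $(0,1)$ isolate $\phi$ and the combinations $\cS^\ka\cdot a^\ka-i\na\cdot a^\ka+\tilde q^\ka$; then $3d$ states with $(U_0^\pm)^TU_0^\pm$ positive definite, together with the fact that $a^\ka,\Phi$ are \emph{real}, decouple $a^\ka\cdot\na u_0^\ka$ and $\Phi\cdot\na u_0^{-\ka}$ from the zeroth-order part (see \eqref{e23}--\eqref{e24}); finally the identity $\norm{e^{s\al_0}(\tilde q^\ka-i\na\cdot a^\ka)}^2=\norm{e^{s\al_0}\tilde q^\ka}^2+\norm{e^{s\al_0}\na\cdot a^\ka}^2$, valid because $\tilde q^\ka$ and $a^\ka$ are real, separates $\tilde q^\ka$ from $\na\cdot a^\ka$. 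Without these real/imaginary splittings the linear system you set up is underdetermined.
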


\begin{rem}
In the special case where $\na \cdot A_0^\pm =0$ in $\Om$ and $(A_1^\pm,A_2^\pm) \in \digamma(\tilde{\cV}_M(A_0^\pm,\R^d))$, the statement of Theorem \ref{thm-main} is still valid for $(q_1^\pm,q_2^\pm) \in \digamma(\cQ_M(q_0^\pm,\C))$ and $q_0^\pm \in W^{2N_d,\infty}(\Om,\C)$, that is to say for complex-valued potentials $q_j^\pm$, $j=0,1,2$. This can be easily checked from the proof of Theorem \ref{thm-main} in Section \ref{sec-IP}.
Nevertheless, in order to avoid the inadequate expense of the size of this article we shall not elaborate on this matter.

Similarly, when the divergence of the magnetic vector potentials is known (that is to say when $\nabla \cdot  (A_1^\pm -A_2^\pm) = 0$ in $\Om$) one can see from the derivation of Theorem \ref{thm-main} that \eqref{se} remains true with only $3d$ local boundary measurements. Such a result is optimal in the sense that the $3d+3$ components of the vector-valued functions representing the unknown magnetic vector potentials, the unknown (divergence free) first order coupling vector, the unknown electric potential and the unknown zero-th order coupling coefficient, amounting altogether to $3d$ degrees\footnote{There are $3(d-1)$ degrees of freedom for the two magnetic vector potentials and the (divergence free) first order coupling vector, and $3$ more degrees for the two electric potentials and the zero-th order coupling scalar coefficient.} of freedom, are recovered with exactly $3d$ local boundary measurements of the solution. 
\end{rem}

\subsection{Comments}
\label{sec-com}
The stability inequality \eqref{se} extends the result of \cite[Theorem 1.2]{KRS} to the case of coupled magnetic \Schr\ equations with time-independent non-zero magnetic potentials, whereas in 
\cite{KRS} the $A_j^\pm$, $j=1,2$, were assumed to be zero everywhere in $\Om$. 
Although the strategy of the proof of \eqref{se} is inspired by the one of \cite[Theorem 1.2]{KRS}, there is an essential technical difference between these two approaches. As a matter of fact, in order to avoid observation at $t=0$ over the whole domain $\Om$, the authors in \cite{KRS} use a Carleman estimate on the extended domain $\Om \times (-T,T)$. As was already noticed in \cite[Section 1.2]{HKSY} and in \cite{CS}, this technique works only for \Schr\ equations with either zero or non-zero but odd time-dependent, magnetic potential. Since, here, $A_j^\pm$, $j=1,2$, are non-zero and time-independent, we will rather use the Carleman inequality stated in Theorem \ref{thm:CE} below, that was specifically designed for magnetic \Schr\ equations in \cite[Theorem 3.1]{HKSY}. This technical change is reflected in the more stringent conditions imposed by \eqref{pco}-\eqref{aco} on the unknown coefficients of the inverse problem studied in this work, relative to the ones in \cite[Theorem 1.2]{KRS}.

\subsection{Outline of the article}
The paper is organized as follows. In Section \ref{sec-eu} we establish Theorem \ref{thm-eu} and Corollary \ref{cor1}. Section \ref{sec-stb2} is devoted to the 
derivation of Theorem \ref{thm-main} while Section \ref{sec-IP} contains tools and technical results that are needed by its proof.
Details concerning the definition of the Dirichlet magnetic Laplacian can be found in the Appendix A and the proof of the relative boundedness with respect to the magnetic Laplacian, of first order perturbations is given in the Appendix B.


\section{Existence and well-posedness}
\label{sec-eu}
In this section we prove Theorem \ref{thm-eu} and Corollary \ref{cor1}. We start by defining the self-adjoint operator associated with $\cH(A^\pm,0,\Phi,0)$ for suitable real-valued vector potentials $A^\pm$ and $\Phi$.  

\subsection{Preliminaries: selfadjointness}

The technical result that we have in mind is as follows.
\begin{lem}
\label{lm-sa}
Assume that $\Ga$ is $C^2$. Let $A^\pm \in W^{1,\infty}(\Om,\R^d)$ and let $\Phi \in W^{1,\infty}(\Om,\R^d)$ fufill \eqref{divfree}.
Then, the operator 
$$H(A^\pm,\Phi) u = \begin{pmatrix}
	-\Delta_{A^+} & \Phi \cdot \na  \\
	-\Phi \cdot \na  & -\Delta_{A^-}\\
\end{pmatrix} u,\ u =(u^+,u^-)^T \in D(H(A^\pm,\Phi))=\left(H_0^1(\Om) \cap H^2(\Om)\right)^2,
$$ 
is self-adjoint in $L^2(\Om)^2$ and we have
$$ H(A^\pm,\Phi) u = \cH(A^\pm,0,\Phi,0) u,\ u \in (H_0^1(\Om) \cap H^2(\Om))^2. $$
\end{lem}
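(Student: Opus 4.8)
The plan is to realize $H(A^\pm,\Phi)$ as a symmetric, infinitesimally small perturbation of the self-adjoint diagonal operator assembled from the two Dirichlet magnetic Laplacians, and then to conclude by the Kato--Rellich theorem. Write $H(A^\pm,\Phi)=H_0+P$, where $H_0=\mathrm{diag}(-\De_{A^+},-\De_{A^-})$ carries the domain $(H_0^1(\Om)\cap H^2(\Om))^2$ and
$$
P=\begin{pmatrix} 0 & \Phi\cdot\na \\ -\Phi\cdot\na & 0 \end{pmatrix}
$$
is the off-diagonal first order operator. First I would recall that, for real-valued $A^\ka\in W^{1,\infty}(\Om,\R^d)$ and $C^2$ boundary, each magnetic Laplacian $-\De_{A^\ka}$ endowed with the Dirichlet domain $H_0^1(\Om)\cap H^2(\Om)$ is self-adjoint in $L^2(\Om)$; this is precisely the content of Appendix~A, where elliptic regularity identifies the form domain with $H_0^1(\Om)\cap H^2(\Om)$. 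Consequently $H_0$ is self-adjoint on the product domain.

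Next comes the key computation, namely the symmetry of $P$ on $(H_0^1(\Om)\cap H^2(\Om))^2$. For $u=(u^+,u^-)^T$ and $v=(v^+,v^-)^T$ in this domain I would integrate by parts in the scalar products $\langle \Phi\cdot\na u^-,v^+\rangle_{L^2(\Om)}$ and $\langle \Phi\cdot\na u^+,v^-\rangle_{L^2(\Om)}$. Since $u^\pm,v^\pm$ vanish on $\Ga$, the boundary contributions disappear, and because $\Phi$ is real-valued and divergence free by \eqref{divfree}, the identity $\na\cdot(\Phi\,\ov{z})=\Phi\cdot\na\ov{z}$ yields $\langle \Phi\cdot\na w,z\rangle_{L^2(\Om)}=-\langle w,\Phi\cdot\na z\rangle_{L^2(\Om)}$; that is, $\Phi\cdot\na$ is skew-symmetric. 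The opposite signs on the two off-diagonal entries of $P$ then combine with this skew-symmetry to give $\langle Pu,v\rangle=\langle u,Pv\rangle$, so $P$ is symmetric. I would stress that the divergence-free hypothesis \eqref{divfree} is exactly what is required here.

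It then remains to show that $P$ is $H_0$-bounded with relative bound zero. As $P$ is a first order differential operator with $W^{1,\infty}$ coefficients while $H_0$ is a uniformly elliptic second order operator, the interpolation estimate $\norm{\na w}_{L^2(\Om)}\le \ve\norm{w}_{H^2(\Om)}+C_\ve\norm{w}_{L^2(\Om)}$, valid for every $\ve>0$, translates into $\norm{Pu}_{L^2(\Om)^2}\le \ve\norm{H_0 u}_{L^2(\Om)^2}+C_\ve\norm{u}_{L^2(\Om)^2}$; this is precisely the relative boundedness of first order perturbations with respect to the magnetic Laplacian proved in Appendix~B. With symmetry and relative bound zero established, the Kato--Rellich theorem applies and shows that $H_0+P=H(A^\pm,\Phi)$ is self-adjoint on $D(H_0)=(H_0^1(\Om)\cap H^2(\Om))^2$. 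Finally, the identity $H(A^\pm,\Phi)u=\cH(A^\pm,0,\Phi,0)u$ on this domain is immediate upon comparing the matrix \eqref{def-HM} entrywise with $q^\pm=0$ and $\phi=0$.

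The genuinely technical ingredients are the relative bound zero of the perturbation and the precise identification of the magnetic Laplacian domain, but both are delegated to Appendices~A and~B; within this lemma the only real obstacle is the integration-by-parts argument verifying that the divergence-free condition on $\Phi$ forces the off-diagonal operator $P$ to be symmetric rather than skew-symmetric.
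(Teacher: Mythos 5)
Your proposal is correct and follows essentially the same route as the paper: the same decomposition into the diagonal self-adjoint operator from Appendix~A plus the off-diagonal first order perturbation, symmetry of the latter via the divergence-free condition, relative bound zero from Appendix~B, and Kato--Rellich. The only difference is that you spell out the integration-by-parts argument showing $\Phi\cdot\na$ is skew-symmetric (so that the sign pattern of the off-diagonal block makes $P$ symmetric), a step the paper merely asserts.
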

\begin{proof}
With reference to Lemma \ref{lm-mdl}, the operator
$$\Delta_{A^\pm}^D u=\begin{pmatrix}
	\Delta_{A^+} &0  \\
	0 & \Delta_{A^-}\\
\end{pmatrix} u,\ u=\left(u^+,u^-\right)^T\in D(\Delta_{A^\pm}^D)=\left(H_0^1(\Om) \cap H^2(\Om)\right)^2,$$
is self-adjoint in $L^2(\Om)^2$.
Next, since $\Phi \in W^{1,\infty}(\Om,\R^d)$ is divergence free, the operator
$$
T_\Phi u =\begin{pmatrix}
	0&\Phi \cdot \nabla\\
	-\Phi \cdot \nabla&0\\
\end{pmatrix} u,\ u=\left(u^+,u^-\right)^T \in D(T_\Phi)=\left(H_0^1(\Om) \right)^2,
$$
is symmetric in $L^2(\Om)^2$. Moreover, $T_\Phi$ is $\Delta_{A^\pm}^D$-bounded, with relative bound zero, according to Lemma \ref{lm-rbp}. Therefore, by Kato-Rellich Theorem (see \cite[Theorem X.12]{MRBS}), the operator $-\Delta_{A^\pm}^D+T_\Phi$, endowed with the domain $\left(H_0^1(\Om) \cap H^2(\Om)\right)^2$ is self-adjoint in $L^2(\Om)^2$ . 
\end{proof}

Next, we introduce for further use the following multiplication operator in $L^2(\Om)^2$ by the functions $q^\pm \in W^{2,\infty}(\Om,\C)$ and $\phi \in W^{2,\infty}(\Om,\C)$, as
$$P_{q^\pm,\phi}=\begin{pmatrix}
	q^+& \phi \\
	\phi & q^-\\
\end{pmatrix}.
$$
Since $D(H(A^\pm,\Phi)))=(H_0^1(\Omega) \cap H^2(\Omega))^2$ and since $q^\pm$ and $\phi$ are taken in $W^{2,\infty}(\Omega)$, it is apparent that \begin{equation}
\label{e01}
P_{q^\pm,\phi} \in C([0,T],\cB(D(H(A^\pm,\Phi)))),
\end{equation} 
where $\cB(X)$ denotes the space of linear bounded operators in the Banach space $X$.

\subsection{Proof of Theorem \ref{thm-eu}}
\label{sec-pr-eu}
We proceed by induction on $m$. \\

\noindent {\it Base step}. We start by proving the statement of Theorem \ref{thm-eu} for $m=1$.
Since $g=(g^+,g^-)^T \in H^{\frac{7}{2},\frac{7}{4}}(\Sigma)^2$ and $u_0=(u_0^+,u_0^-)^T \in H^3(\Om)^3$, we apply \cite[Chapter 4, Theorem 2.1]{LM2} and get $G=(G^+,G^-)^T\in H^{4,2}(Q)^2$ satisfying $G=g$ on $\Sigma$, $G(\cdot,0)=u_0$ in $\Om$, and the estimate
\begin{equation}
\label{d1}
\norm{G}_{H^{4,2}(Q)^2} \leq C \left( \norm{u_0}_{H^3(\Om)^2} + \norm{g}_{H^{\frac{7}{2},\frac{7}{4}}(\Sigma)^2} \right).
\end{equation}
Here and in the remaining part of this proof, $C$ denotes a generic positive constant depending only on $\Om$, $T$ and $M$, which may change from line to line.

Next we notice that $u=(u^+,u^-)^T$ is a solution to \eqref{sys:ori} if and only if $v=u-G=(u^+-G^+,u^- - G^-)^T$ solves
\begin{equation}
\label{d2}		
\left\{
\begin{array}{ll}
-i \partial_t v + \left( H(A^\pm,\Phi)  + M_{q^\pm,p} \right) v=f &\textrm{in}\ Q \\
v(\cdot,0)=0 &\textrm{in}\ \Om,
\end{array}
\right.
\end{equation}
where $f=(f^+,f^-)^T=i \partial_t G - \cH(A^\pm,q^\pm,\Phi,\phi) G$.
In light of \eqref{n1}-\eqref{n2}, the functions $\partial_t G$ and $\cH(A^\pm,q^\pm,\Phi,\phi) G$ are both in $H^{2,1}(Q)^2$ and they satisfy the estimate
$$\norm{\partial_t G}_{H^{2,1}(Q)^2} + \norm{\cH(A^\pm,q^\pm,\Phi,\phi) G}_{H^{2,1}(Q)^2}\leq C \norm{G}_{H^{4,2}(Q)^2}. $$
Thus we have $f \in H^{0,1}(Q)=H^1(0,T;L^2(\Om)^2)$ and
\begin{align}
\label{d3}
\norm{f}_{H^1(0,T;L^2(\Om)^2)} & \leq C \left( \norm{\partial_t G}_{H^{2,1}(Q)^2} +\norm{\cH(A^\pm,q^\pm,\Phi,\phi) G}_{H^{2,1}(Q)^2} \right) \\
& \leq C \norm{G}_{H^{4,2}(Q)^2}. \nonumber 
\end{align}
Moreover, due to \eqref{e01} and the maximal dissipativity of the operator $-iH(A^\pm,\Phi)$ which follows readily from Lemma \ref{lm-sa}, we may apply \cite[Lemma 2.1]{CKS} with $X=L^2(\Om)^2$, $M_0=-iH(A^\pm,\Phi)$ and $B=-i P_{q^\pm,\phi}$. We obtain that \eqref{d2} admits a unique solution $v \in H^{2,1}(Q)^2$ such that
$$
\norm{v}_{H^{2,1}(Q)^2} \leq C \norm{f}_{H^1(0,T;L^2(\Om)^2)}.
$$
Finally, using that $u=v+G$, we get \eqref{d4c} by combining the above estimate with \eqref{d1} and \eqref{d3}.\\

 \noindent {\it Inductive step}. Fix $m \in \{ 2, 3, \ldots \}$ and assume that the statement of Theorem \ref{thm-eu} where $m-1$ is substituted for $m$, holds. 
Let $u$ be the $\bigcap_{\ell=0}^{m-1} H^{m-1-\ell}(0,T;H^{2 \ell}(\Om)^2)$-solution to \eqref{sys:ori1} obtained by applying Theorem \ref{thm-eu} where $m$ was replaced by $m-1$, satisfying
\begin{equation}
\label{f30}
\sum_{\ell=0}^{m-1} \norm{u}_{H^{m-1-\ell}(0,T;H^{2 \ell}(\Om)^2)} \leq C \left( \norm{u_0}_{H^{2m-1}(\Om)^2}+ \norm{g}_{H^{2(m-\frac{1}{4}),m-\frac{1}{4}}(\Sigma)^2}\right),
\end{equation}
according to \eqref{d4c}. In particular we have $u \in H^1(0,T;H^{2(m-2)}(\Om)^2)$ and the function $w=\partial_t u$ solves
\begin{equation}
\label{f31}		
\left\{
\begin{array}{ll}
-i \partial_t w + \cH(A^\pm,q^\pm,\Phi,\phi) w = 0 &\mbox{in}\ Q \\
w = \partial_t g & \mbox{in}\ \Sigma \\
w(\cdot,0)=w_0 &\mbox{in}\ \Om,
\end{array}
\right.
\end{equation}
where $w_0=-i \cH(A^\pm,q^\pm,\Phi,\phi) u_0 \in H^{2m-1}(\Om)^2$
satisfies $\norm{w_0}_{H^{2m-1}(\Om)^2} \le C \norm{u_0}_{H^{2m+1}(\Om)^2}$ and
$\partial_t g \in H^{2 \left( m-\frac{1}{4}\right),m-\frac{1}{4}}(\Sigma)^2$ fulfills
$\norm{\partial_t g}_{H^{2 \left( m-\frac{1}{4}\right),m-\frac{1}{4}}(\Sigma)^2} \le \norm{g}_{H^{2 \left( m+\frac{3}{4}\right),m+\frac{3}{4}}(\Sigma)^2}$, from \eqref{n1}-\eqref{n2}. Moreover, for all $\ell=0,1,\ldots,m-2$, we have
\begin{align*}
\partial_t^\ell (\partial_t g)(\cdot,0)  & = \partial_t^{\ell+1} g(\cdot,0) \\
& = (-i)^{\ell+1} \left[ \cH(A^\pm,q^\pm,\Phi,\phi) \right]^{\ell+1} u_0
\end{align*}
according to \eqref{d4c},
whence 
$$\partial_t^\ell (\partial_t g)(\cdot,0)=(-i)^\ell \left[ \cH(A^\pm,q^\pm,\Phi,\phi)\right]^{\ell} w_0. $$
Therefore we have $w \in \bigcap_{\ell=0}^{m-1} H^{m-1-\ell}(0,T;H^{2 \ell}(\Om)^2)$ according to the induction hypothesis, and the estimate
\begin{align}
\label{f31b}
\sum_{\ell=0}^{m-1} \norm{w}_{H^{m-1-\ell}(0,T;H^{2 \ell}(\Om)^2)} & \le C \left( \norm{w_0}_{H^{2m-1}(\Om)^2} + \norm{\partial_t g}_{H^{2 \left( m - \frac{1}{4} \right) , m - \frac{1}{4}}(\Sigma)^2} \right) \\
& \le C \left( \norm{u_0}_{H^{2m+1}(\Om)^2} + \norm{g}_{H^{2 \left( m+\frac{3}{4}\right),m+\frac{3}{4}}(\Sigma)^2}\right). \nonumber 
\end{align}
From this and \eqref{f30} it then follows that $u \in \bigcap_{\ell=0}^{m-1} H^{m-\ell}(0,T;H^{2 \ell}(\Om)^2)$ verifies
\begin{equation}
\label{f32}
\sum_{\ell=0}^{m-1} \norm{u}_{H^{m-\ell}(0,T;H^{2 \ell}(\Om)^2)}  \le C \left( \norm{u_0}_{H^{2m+1}(\Om)^2} + \norm{g}_{H^{2 \left( m+\frac{3}{4}\right),m+\frac{3}{4}}(\Sigma)^2}\right).
\end{equation}
Thus it remains to show that $u \in L^2(0,T;H^{2m}(\Om)^2)$ and that 
$$ \norm{u}_{L^2(0,T;H^{2 m}(\Om)^2)}  \le C \left( \norm{u_0}_{H^{2m+1}(\Om)^2} + \norm{g}_{H^{2 \left( m+\frac{3}{4}\right),m+\frac{3}{4}}(\Sigma)^2}\right). $$

To do that we notice from \eqref{def-deltaA} and \eqref{def-HM}-\eqref{sys:ori1} that for a.e. $t \in (0,T)$, the function $u(\cdot,t)$ is a solution to the following boundary value problem (BVP)
\begin{equation}
\label{f21b}
\left\{
\begin{array}{ll}
-\Delta u(\cdot,t) = \psi(\cdot,t) &\mbox{in}\ Q \\
u(\cdot,t) = g(\cdot,t) & \mbox{on}\ \Gamma,
\end{array}
\right.
\end{equation}
where $\De u=(\De u^+,\De u^-)^T$ and $\psi = (\psi^+,\psi^-)^T$ is expressed by
\begin{align}
\label{f22}
\psi^{\pm}(x,t)  = & \left( i \partial_t + 2 i A^\pm(x) \cdot \na  + i (\na \cdot A^\pm(x)) -\abs{A^\pm(x)}^2  -q^\pm(x) \right) u^{\pm}(x,t) \\
& + \left(  \mp \Phi(x) \cdot \na - \phi(x) \right) u^\mp(x,t),\ x \in \Om,\ t \in (0,T). \nonumber
\end{align}
Using the elliptic regularity property of the BVP \eqref{f21b} we may now improve the spatial regularity of $u$ as follows.
Namely, since $u \in H^1(0,T;H^{2(m-1)}(\Om)^2)$, it follows from \eqref{f22} that $\psi(\cdot,t) \in H^{2m-3}(\Om)^2$ for a.e. $t \in (0,T)$. Moreover, since $g(\cdot,t) \in H^{2 \left( m - \frac{3}{4}\right)}(\Gamma)^2$ and $\Gamma \in C^{2m}$, we deduce from \eqref{f21b} that $u(\cdot,t) \in H^{2m-1}(\Om)^2$ satisfies
\begin{align*}
\norm{u(\cdot,t)}_{H^{2m-1}(\Om)^2} & \le C \left( \norm{\psi(\cdot,t)}_{H^{2m-3}(\Om)^2} + \norm{g(\cdot,t)}_{H^{2 \left( m - \frac{3}{4} \right)}(\Gamma)^2} \right) \\
& \le C \left(  \norm{w(\cdot,t)}_{H^{2m-3}(\Om)^2} +  \norm{u(\cdot,t)}_{H^{2(m-1)}(\Om)^2} + \norm{g(\cdot,t)}_{H^{2 \left( m - \frac{3}{4} \right)}(\Gamma)^2} \right). \nonumber 
\end{align*}
This entails that $u \in L^2(0,T;H^{2m-1}(\Om)^2)$ and that
\begin{align}
\label{f23}
\norm{u}_{L^2(0,T;H^{2m-1}(\Om)^2)} & \le C \left(  \norm{w}_{L^2(0,T;H^{2m-3}(\Om)^2)} +  \norm{u}_{L^2(0,T;H^{2(m-1)}(\Om)^2)} + \norm{g}_{L^2(0,T;H^{2 \left( m - \frac{3}{4} \right)}(\Gamma)^2)} \right) \\
& \le C \left(  \norm{w}_{L^2(0,T;H^{2(m-1)}(\Om)^2)} +  \norm{u}_{L^2(0,T;H^{2(m-1)}(\Om)^2)} + \norm{g}_{L^2(0,T;H^{2 \left( m - \frac{3}{4} \right)}(\Gamma)^2)} \right). \nonumber 
\end{align}
Moreover we have
$\psi(\cdot,t) \in H^{2(m-1)}(\Om)^2$ for a.e. $t \in (0,T)$, by \eqref{f22}, and since $g(\cdot,t) \in H^{2 \left( m-\frac{1}{4} \right)}(\Gamma)^2$ and $\Gamma \in C^{2m}$, we infer from the BVP \eqref{f21b} that $u(\cdot,t) \in H^{2m}(\Om)^2$ and that
\begin{align*}
\norm{u(\cdot,t)}_{H^{2m}(\Om)^2} & \le C \left( \norm{\psi(\cdot,t)}_{H^{2(m-1)}\Om)^2} + \norm{g(\cdot,t)}_{H^{2 \left( m-\frac{1}{4} \right)}(\Gamma)^2} \right) \\
& \le C \left(  \norm{w(\cdot,t)}_{H^{2(m-1)}(\Om)^2} +  \norm{u(\cdot,t)}_{H^{2m-1}(\Om)^2} + \norm{g(\cdot,t)}_{H^{2 \left( m-\frac{1}{4} \right)}(\Gamma)^2} \right). \nonumber 
\end{align*}
From this and \eqref{f23} it then follows that $u \in L^2(0,T;H^{2m}(\Om)^2)$ satisfies
\begin{align*}
\norm{u}_{L^2(0,T;H^{2m}(\Om)^2)} & \le C \left(  \norm{w}_{L^2(0,T;H^{2(m-1)}(\Om)^2)} +  \norm{u}_{L^2(0,T;H^{2m-1}(\Om)^2)} + \norm{g}_{L^2(0,T;H^{2 \left( m-\frac{1}{4} \right)}(\Gamma)^2)} \right) \\
& \le C \left( \norm{w}_{L^2(0,T;H^{2(m-1)}(\Om)^2)} +  \norm{u}_{L^2(0,T;H^{2(m-1)}(\Om)^2)} + \norm{g}_{L^2(0,T;H^{2 \left( m-\frac{1}{4} \right)}(\Gamma)^2)} \right),
\end{align*}
where we used in the last line that $\norm{u}_{L^2(0,T;H^{2m-1}(\Om)^2)} \le \norm{w}_{L^2(0,T;H^{2(m-1)}(\Om)^2)} +  \norm{u}_{L^2(0,T;H^{2(m-1)}(\Om)^2)}$.
Putting this together with \eqref{f31b}-\eqref{f32} we end up getting
 \eqref{d4c}, which terminates the proof. 
 
\subsection{Proof of Corollary \ref{cor1}}
An application of Theorem \ref{thm-eu} with $m=N_d$ shows that $u \in H^2(0,T;H^{2(N_d-2)}(\Om)^2)$. Further, since
$2(N_d-2) > d/2+1$ from \eqref{def-N}, we have 
$$u \in W^{1,\infty}(0,T;W^{1,\infty}(\Om)^2)$$ 
by Sobolev embedding theorem, and the estimate
$$ \norm{u}_{W^{1,\infty}(0,T;W^{1,\infty}(\Om)^2)}  \leq  c \norm{u}_{H^2(0,T;H^{2(N_d-2)}(\Om)^2)}, $$
where $c$ is a positive constant depending only on $\Om$, $T$ and $M$. Finally, \eqref{e00} follows from this and \eqref{d4c}.

\section{Inverse problem: tools and preliminaries}
\label{sec-IP}
The analysis of the inverse problem studied in this work is built upon the ideas of the BK method, which mostly 
relies on a global Carleman estimate for the operator
\begin{equation}
	\label{equ-gov}
	L=-i \pa_t  - \De  
\end{equation}
acting in $\left( C_0^\infty(Q) \right)^{\prime}$. This inequality is presented in the coming section.

\subsection{A Carleman inequality}
\label{sec-CE}
Let us consider a nonnegative function $\title{\beta}\in \mathcal{C}^4(\overline{\Om})$ satisfying the two following conditions:
\begin{enumerate}[(i)]
\item $\beta$ has no critical point in $\Om$: 
\begin{align}
\label{beta1}
\exists c_0 \in (0,\infty),\ \abs{\na\be(x)} \ge c_0,\ x \in \Om;
\end{align}
\item $\beta$ is pseudo-convex with respect to the Laplace operator: 
\begin{align}
\label{beta2}
\exists (\lambda_0,\ep) \in(0,\infty)^2,\ \la \ge \la_0 \Longrightarrow \la \abs{\na \be(x) \cdot \xi}^2 + \sum_{i,j=1}^d \pa_{ij}^2 \be(x) \xi_i \xi_j \ge \ep \abs{\xi}^2,\ x \in \Om,\ \xi\in \Bbb{R}^d.
\end{align} 
\end{enumerate}
Further we pick a function $\ell \in C^1([0,T],[0,\infty))$ obeying
\begin{equation}
	\label{con:l}
\quad\ell(T) = 0\ \mbox{and}\ 0 \le \ell(t)<\ell(0),\ t\in (0,T].
\end{equation}
There are numerous examples of functions $\beta$ and $\ell$ fulfilling the conditions \eqref{beta1}-\eqref{beta2} and \eqref{con:l} respectively. For instance, 
$\beta(x)=\abs{x-x_0}^2$ for all $x \in \overline{\Om}$ where $x_0$ is an arbitrary fixed point in $\R^d \setminus \overline{\Om}$ and $\ell(t)=(T-t)(T+t)$ for all $t \in [0,T]$, are one of them and it is well known in this peculiar case that the observation zone of the Neumann data used in the analysis of the inverse problem studied in this work, defined by
\begin{equation}
	\label{def-Ga0}
	\Ga_0 = \{x\in\pa\Om:  \na \be(x) \cdot \nu(x) \ge 0\},
\end{equation}
is the $x_0$-shadowed face of the boundary $\Ga$, see e.g. \cite{BP02,HKSY}). 

Next we introduce the following weight function  
\begin{align}
\label{con:weight}
	\al(x,t) = \f{e^{2\la\be(x)} -e^{\la K}}{\ell^2(t)},\ (x,t) \in Q, 
\end{align}
where $K=2\sup_{x\in\Om}\be(x)$ and $\lambda \in [1,\infty)$ is taken so large relative to $K$ that $\lambda \ge 2(\ln 2)K-1$. 
This being said 
we notice from \eqref{equ-gov} through direct calculation that we have
\begin{align*}
	e^{s\al}Le^{-s \al} 
	= is(\pa_t \al) + R_1+R_2,\ s \in (0,\infty),
\end{align*}
where 
\begin{equation}
\label{def:R}
R_1 = -i\pa_t  - \De  - s^2|\na\al|^2\ \mbox{and}\ R_2  = 2s\na\al\!\cdot\!\na  + s(\De\al). 
\end{equation}


%
Then we have the following global Carleman estimate for the operator \eqref{equ-gov} as a straightforward consequence of \cite[Theorem 3.1 and Remark 3.2]{HKSY}.
\begin{thm}
\label{thm:CE}
Let $L$, $\be$, $\ell$, $\la$, $\al$ and $R_j$ for $j=1,2$, be as above. Then, there exist two positive constants $s_0$ and $C$, both of them depending only on $\ep$, $c_0$, $\|\be\|_{L^\infty(\Om)}$, $\ell(0)$, $\|\ell^\prime\|_{L^\infty(\Om)}$ and $\la$, such that for all $s \in (s_0,\infty)$ and all $u\in L^2(0,T; H_0^1(\Om))$ satisfying $Lu\in L^2(Q)$ and $\pa_\nu u\in L^2(\Sigma)$, we have	
\begin{align}
\label{CE}
& \|R_1 (e^{s\al} u)\|_{L^2(Q)}^2 + \|R_2 (e^{s\al} u)\|_{L^2(Q)}^2 + s\|e^{s\al}\na u\|_{L^2(Q)}^2 + s^3\|e^{s\al} u\|_{L^2(Q)}^2
		 \\
\le & C \left( \|e^{s\al}Lu\|_{L^2(Q)}^2 +   \left\|    \pa_\nu u \right\|_{L^2(\Sigma_0)}^2 + s \cI(u(\cdot,0))\right), \nonumber
\end{align}
where $\Sigma_0=\Ga_0 \times (0,T)$ and
\begin{equation}
	\label{defI}
	\cI(u(\cdot,0)) = \int_{\Om} e^{2s\al(x,0)} \abs{\na\be(x) \cdot ( \ov{u} \na u- u \na \ov{u})(x,0)} dx.
\end{equation} 
\end{thm}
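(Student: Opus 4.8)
The plan is to obtain \eqref{CE} as the specialization of the magnetic Carleman estimate of \cite[Theorem 3.1]{HKSY} to a vanishing magnetic potential. The starting point is the conjugated operator already displayed above: writing $v = e^{s\al}u$ and using \eqref{equ-gov}, one has $e^{s\al}L e^{-s\al} = is(\pa_t\al) + R_1 + R_2$ with $R_1$, $R_2$ given by \eqref{def:R}. Since $L = -i\pa_t - \De$ is exactly the $A^\pm = 0$ instance of the magnetic operator $-i\pa_t - \De_{A}$ treated in \cite{HKSY}, the entire conjugation-and-splitting scheme of that reference applies once I verify that the present weight data meet its structural requirements.

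That verification reduces to three checks. First, that $\be \in \mathcal{C}^4(\ov{\Om})$ satisfies the gradient lower bound \eqref{beta1} and the pseudo-convexity inequality \eqref{beta2}; for the explicit choice $\be(x) = \abs{x-x_0}^2$ with $x_0 \notin \ov{\Om}$ these follow from $\na\be(x) = 2(x-x_0)$, bounded below by $2\,\mathrm{dist}(x_0,\ov{\Om})>0$, and from $\pa_{ij}^2\be = 2\de_{ij}$, which already makes the Hessian term in \eqref{beta2} coercive with $\ep = 2$ for every $\la$. Second, that the temporal factor $\ell$ obeys \eqref{con:l}, so that the weight $\al$ of \eqref{con:weight} satisfies $\al(\cdot,t)\to-\infty$ as $t\to T$ (recall that the numerator of \eqref{con:weight} is nonpositive once $K = 2\sup_\Om\be$), which forces the boundary-in-time contributions to localize at $t=0$ and produces precisely the term $\cI(u(\cdot,0))$ of \eqref{defI}. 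Third, that $K$ and $\la \ge 2(\ln 2)K - 1$ are chosen as prescribed, fixing the sign normalization of $\al$ used in the energy identity. With these hypotheses in force, \cite[Theorem 3.1 and Remark 3.2]{HKSY} yield \eqref{CE} directly, the constants $s_0$ and $C$ depending only on the stated quantities.

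For completeness I would indicate where the genuine content of \cite{HKSY} lies, since that is what makes the estimate work. After the splitting one expands $\norm{(R_1+R_2)v}_{L^2(Q)}^2 = \norm{R_1 v}^2 + \norm{R_2 v}^2 + 2\Re\seq{R_1 v, R_2 v}_{L^2(Q)}$ and integrates the cross term by parts in both $x$ and $t$. The pseudo-convexity \eqref{beta2} is exactly what converts the resulting interior quadratic form into the positive lower bound $s\norm{e^{s\al}\na u}^2 + s^3\norm{e^{s\al}u}^2$ for $s$ large; this weighted-commutator bookkeeping, specific to the \Schr\ symbol $-i\pa_t - \De$, is the heart of the argument and the step I expect to be the main obstacle. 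The spatial integrations by parts also generate a boundary term on $\Sigma = \Ga\times(0,T)$ whose sign is controlled on the complement of $\Ga_0$ by the defining condition $\na\be\cdot\nu \ge 0$ of \eqref{def-Ga0}, so that only the observed piece $\norm{\pa_\nu u}_{L^2(\Sigma_0)}^2$ survives on the right-hand side, while the time integrations by parts leave the single initial contribution $\cI(u(\cdot,0))$ because $e^{2s\al(\cdot,t)}\to 0$ as $t\to T$.
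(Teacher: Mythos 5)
Your approach coincides with the paper's: the theorem is stated there as a direct consequence of \cite[Theorem 3.1 and Remark 3.2]{HKSY} with no further argument, and your proposal invokes the same reference after correctly verifying the hypotheses \eqref{beta1}--\eqref{beta2}, \eqref{con:l} and the normalization of $\la$ and $K$. Your added sketch of the conjugation, the cross-term integration by parts, the role of pseudo-convexity, and the localization of the boundary-in-time term at $t=0$ is accurate and only supplements what the paper leaves implicit.
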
 

The first step of the BK method is to linearize the system under study. 


\subsection{Linearized system}
We mean by this that we write $u_j=(u^+_j,u^-_j)^T$, $j=1,2$, for the solution to \eqref{sys:ori} where $(A_j^\pm,q_j^\pm,\Phi_j^\pm,\phi_j^\pm)$ is substituted for $(A^\pm,q^\pm,\Phi,\phi)$ and we take the difference of the two corresponding systems. This way, putting $A^\pm=A_1^\pm-A_2^\pm$, $q^\pm=q_1^\pm-q_2^\pm$, $\Phi=\Phi_1-\Phi_2$ and $\phi=\phi_1-\phi_2$, we obain that $u=(u^+,u^-)^T= (u_1^+-u_2^+,u_1^- - u_2^-)^T$ solves
\begin{equation}
\label{p1}
\left\{
\begin{array}{ll}
-i\partial_t u + \cH(A_1^\pm,q_1^\pm,\Phi_1,\phi_1) u =  \cG(A^\pm,q^\pm,\Phi,\phi) u_2&\textrm{in}\ Q \\
		u(\cdot,0)=0&\textrm{in}\ \Om\\
		u=0&\textrm{on}\ \Sigma,
	\end{array}
	\right.
\end{equation}
where
$$
\cG(A^\pm,q^\pm,\Phi,\phi)= \begin{pmatrix} \vartheta_{\cS^+}(A^+, q^+) & - \Theta(\Phi,\phi) \\ -\Theta(-\Phi,\phi) & \vartheta_{\cS^-}(A^-, q^-) \end{pmatrix}, 
$$
the notation $\mathcal{S}^\pm$ being a shorthand for $A_1^\pm + A_2^\pm$, and
\begin{equation}
\label{def-theta}
\vartheta_{\cS^\pm}(A^\pm, q^\pm)=2 i A^\pm \cdot \nabla +i (\na_{\cS^\pm} \cdot A^\pm)  - q^\pm,\ \Theta(\pm\Phi,\phi)=\pm\Phi \cdot \nabla + \phi.
\end{equation}
Here and in the remaining part of this text, $\na_X$ for $X \in \C^d$, denotes the $X$-magnetic gradient operator 
$$\na_X= \nabla + i X. $$
Since $u^\pm \in H^2(0,T;L^2(\Om)) \cap H^1(0,T;H^2(\Om)\cap H^1_0(\Om))$ we may differentiate \eqref{p1} with respect to the time-variable. This is the second step of the BK method and as we shall see below its main benefit is that the initial state of the new system is expressed as a function of the unknown parameters $A^\pm$, $q^\pm$, $\Phi$ and $\phi$.

\subsection{Time-differentiation} 
Setting 
$v^\pm=\partial_t u^\pm \in H^1(0,T;L^2(\Om))\cap L^2(0,T;H^2(\Om)\cap H^1_0(\Om))$, we get that $v=\partial_t u=(v^+,v^-)^T$ is the solution to the following coupled system
$$
\left\{
\begin{array}{ll}
-i\partial_t v + \cH(A_1^\pm,q_1^\pm,\Phi_1,\phi_1) v =  \cG(A^\pm,q^\pm,\Phi,\phi) \partial_t u_2 &\mbox{in}\ Q \\
v(\cdot,0)=  i \cG(A^\pm,q^\pm,\Phi,\phi) u_0 &\textrm{in}\ \Om\\
v=0&\mbox{on}\ \Sigma. \\
\end{array}
\right.
$$
In light of \eqref{equ-gov} this can be equivalently rewritten as
\begin{equation}
\label{p3}
\left\{
\begin{array}{ll}
Lv^\pm = f^\pm &\mbox{in}\ Q \\
v^\pm(\cdot,0)=v_0^\pm
&\mbox{in}\ \Om\\
v^\pm=0 &\mbox{on}\ \Sigma,
\end{array}
\right.
\end{equation}
where
\begin{equation}
\label{def-v0pm}
v_0^\pm=i \left( \vartheta_{\cS^\pm}(A^\pm,q^\pm) u_0^\pm - \Theta(\pm \Phi,\phi) u_0^\mp \right)
\end{equation}
and
\begin{equation}
\label{def-fpm}
f^\pm = \vartheta_{A_1^\pm}(A_1^\pm, q_1^\pm)v^\pm - \Theta( \mp \Phi_1,\phi_1) v^\mp + \vartheta_{\cS^\pm}(A^\pm, q^\pm)\partial_t u_2^\pm - \Theta( \mp \Phi,\phi) \partial_t u_2^\mp.
\end{equation}
As expected, $v_0^\pm$ is expressed in terms of the unknowns $A^\pm$, $q^\pm$, $\Phi$ and $\phi$. 
In light of the right hand side of \eqref{def-v0pm} we are thus left with the task 
of extracting the relevant information on each of these unknown coefficients by suitably choosing the initial states $u_0^\pm$. This will be carried out below
in Section \ref{sec-eop} but prior to doing that, we shall estimate the $e^{s \al(\cdot,0)}$-weighted $L^2(\Om)$-norm of $v_0^\pm$, $s \in (0,\infty)$, with the following lemma, whose proof can be found in \cite[Section 4.1]{HKSY}.

\begin{lem}
For all $s \in (0,\infty)$, we have
\begin{equation}
\label{eq:lem}
\norm{e^{s \al_0} v^\pm (\cdot,0)}_{L^2(\Om)}^2 \le s^{-\f{3}{2}} \left(\norm{R_1 e^{s\al} v^\pm}_{L^2(Q)}^2 + s^3 \norm{e^{s \al} v^\pm}_{L^2(Q)}^2 \right),
\end{equation}
where $\alpha_0(x)=\alpha(x,0)$ for all $x \in \overline{\Om}$ and $R_1$ is defined by \eqref{def:R}.
\end{lem}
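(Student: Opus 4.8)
The plan is to set $w = e^{s\al} v^\pm$ and to recover $w(\cdot,0)$ from the behaviour of $t \mapsto \norm{w(\cdot,t)}_{L^2(\Om)}^2$ on $(0,T)$. Two structural facts drive the argument. First, since $v^\pm$ vanishes on $\Sigma$ by \eqref{p3}, so does $w$, hence $w(\cdot,t) \in H_0^1(\Om)$ for a.e. $t$. Second, because $\ell(T)=0$ while the numerator $e^{2\la\be(x)}-e^{\la K}$ of $\al$ is strictly negative (recall $K=2\sup_\Om\be$), the weight $\al(x,t)$ tends to $-\infty$ as $t \uparrow T$, so that $e^{s\al(\cdot,t)} \to 0$ and $w(\cdot,T)=0$. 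The fundamental theorem of calculus in $t$ then gives
\begin{equation*}
\norm{w(\cdot,0)}_{L^2(\Om)}^2 = - \int_0^T \pa_t \norm{w(\cdot,t)}_{L^2(\Om)}^2 \, \dt .
\end{equation*}

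Next I would differentiate under the integral sign and invoke the definition \eqref{def:R} of $R_1$ to eliminate the time derivative of $w$. Writing $\pa_t \norm{w(\cdot,t)}_{L^2(\Om)}^2 = 2\Re \int_\Om \ov{w}\, \pa_t w \, dx$ and inverting $R_1 w = -i\pa_t w - \De w - s^2 \abs{\na\al}^2 w$ into $\pa_t w = i\left( R_1 w + \De w + s^2 \abs{\na\al}^2 w \right)$, the last two contributions disappear under $\Re$. Indeed, integrating by parts the Laplacian term (the boundary term vanishes since $w(\cdot,t) \in H_0^1(\Om)$) produces $\int_\Om \ov{w}\, \De w \, dx = -\int_\Om \abs{\na w}^2 \, dx \in \R$, while $s^2 \int_\Om \abs{\na\al}^2 \abs{w}^2 \, dx \in \R$ as well since $\al$ is real-valued; as $\Re(iz)=0$ whenever $z \in \R$, only the $R_1$ term survives, leaving
\begin{equation*}
\pa_t \norm{w(\cdot,t)}_{L^2(\Om)}^2 = 2\Re \left( i \int_\Om \ov{w}\, R_1 w \, dx \right).
\end{equation*}

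The final step is purely quantitative. Bounding the surviving term by Cauchy--Schwarz gives $\abs{\pa_t \norm{w(\cdot,t)}_{L^2(\Om)}^2} \le 2 \norm{w(\cdot,t)}_{L^2(\Om)} \norm{R_1 w(\cdot,t)}_{L^2(\Om)}$; integrating over $(0,T)$ and applying Young's inequality $2ab \le \mu^{-1} a^2 + \mu b^2$ with the parameter chosen as $\mu = s^{3/2}$ then yields
\begin{equation*}
\norm{w(\cdot,0)}_{L^2(\Om)}^2 \le s^{-3/2} \norm{R_1 w}_{L^2(Q)}^2 + s^{3/2} \norm{w}_{L^2(Q)}^2 = s^{-3/2} \left( \norm{R_1 w}_{L^2(Q)}^2 + s^3 \norm{w}_{L^2(Q)}^2 \right),
\end{equation*}
which is exactly \eqref{eq:lem} once we recall $w = e^{s\al} v^\pm$ and $e^{s\al(\cdot,0)} = e^{s\al_0}$.

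I expect the only genuinely delicate points to be the two facts feeding the first display: the vanishing $w(\cdot,T)=0$ and the legitimacy of differentiating under the integral and of the integration by parts. Both are secured by the regularity $v^\pm \in H^1(0,T;L^2(\Om)) \cap L^2(0,T;H^2(\Om) \cap H_0^1(\Om))$ recorded just before \eqref{p3}, combined with the fast decay of $e^{s\al}$ near $t=T$. Once these are in place, the cancellation of the self-adjoint part of $R_1$ and the tuning $\mu = s^{3/2}$ are entirely routine.
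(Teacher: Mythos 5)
Your argument is correct and is essentially the same as the one the paper defers to (\cite[Section 4.1]{HKSY}): write $w=e^{s\al}v^\pm$, use $w(\cdot,T)=0$ and the fundamental theorem of calculus, cancel the self-adjoint part of $R_1$ under $\Re(i\,\cdot)$ via integration by parts in $H_0^1(\Om)$, and close with Cauchy--Schwarz and Young with parameter $s^{3/2}$. The only point worth stating explicitly is why $e^{2\la\be(x)}-e^{\la K}<0$ on $\Om$ (namely, $\be$ has no interior critical point by \eqref{beta1}, so it does not attain $\sup_\Om\be$ inside $\Om$), which is what guarantees $\al(x,t)\to-\infty$ as $t\uparrow T$ and hence $w(\cdot,T)=0$.
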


We shall see in Section \ref{sec-obs} that the right hand side of \eqref{eq:lem} can be bounded (up to some multiplicative constant) by the Neumann data of the inverse problem under scrutiny, upon taking $s$ sufficiently large and applying the Carleman estimate of Theorem \ref{thm:CE}.


\section{Proof of Theorem \ref{thm-main}}
\label{sec-stb2}

We start by showing that the unknowns coefficients $A^\pm$, $q^\pm$, $\Phi$ and $\phi$ can be observed by the Neumann data associated with the IBVP \eqref{p3}.

\subsection{Observation inequality}
\label{sec-obs}
Since $v^\pm \in L^2(0,T;H_0^1(\Om))$, we have $L v^\pm \in L^2(0,T;L^2(\Om))$ and $\pa_\nu v\in L^2(0,T; L^2(\pa\Om))$, hence we can apply the Carleman estimate of Theorem \ref{thm:CE} to $v^\pm$. We get that
\begin{align}
\label{aqzs}
& \sum_{\ka=+,-} \left( \sum_{j=1}^2 \|R_j (e^{s\al} v^\ka)\|_{L^2(Q)}^2 + s\|e^{s\al}\na v^\ka\|_{L^2(Q)}^2 + s^3\|e^{s\al} v^\ka\|_{L^2(Q)}^2 \right)  \\
\le & C \sum_{\ka=+,-} \left(  \|e^{s\al}Lv^\ka\|_{L^2(Q)}^2
+\left\|   \pa_\nu v^\ka \right\|_{L^2(\Sigma_0)}^2  + s  \cI(v_0^\ka) \right),\ s \in (s_0,\infty).
\nonumber
\end{align}
Next, with reference to the first line of \eqref{p3} and to \eqref{def-fpm} we have
\begin{equation}
\label{e7}
\norm{e^{s\al}Lv^\pm}_{L^2(Q)} \le V_{A_1^\pm}(A_1^\pm,q_1^\pm,v^\pm) + T(\Phi_1,\phi_1,v^\mp) +
V_{\cS^\pm}(A^\pm, q^\pm,\partial_t u_2^\pm) + T(\Phi,\phi,\partial_t u_2^\mp),
\end{equation}
where
\begin{equation} 
\label{def-V}
V_{X}(Y,q,w)=2 \norm{e^{s\al} Y \cdot \nabla w}_{L^2(Q)} +\norm{e^{s\al} (\nabla \cdot Y) w}_{L^2(Q)}
+\norm{ e^{s\al} (X \cdot Y) w}_{L^2(Q)} +\norm{e^{s\al}  q w}_{L^2(Q)}
\end{equation}
and
\begin{equation}
\label{def-T}
T(\Phi,\phi,w)=\norm{e^{s\al} \Phi \cdot \nabla w}_{L^2(Q)}
+\norm{e^{s\al} \phi w}_{L^2(Q)}.
\end{equation}
Further, using that 
$\norm{\Phi_1}_{L^\infty(\Om)^d} \le M$, $\norm{\phi_1}_{L^\infty(\Om)} \le M$, $\norm{A_1^\pm}_{W^{1,infty}(\Om)^d} \le M$ and $\norm{q_1^\pm}_{L^\infty(\Om)} \le M$ (as we have $\Phi_1 \in \cV_M(\Phi_0)$, $\phi_1 \in\cQ_M(\phi_0)$, $A_1^ \pm \in \cV_M(A_0^\pm)$ and $q_1^ \pm \in\cQ_M(q_0^\pm)$ by assumption), we obtain that for all $s \in (0,\infty)$ ,
\begin{align}
\label{e8}
& V_{A_1^\pm}(A_1^\pm,q_1^\pm,v^\pm) + T(\Phi_1,\phi_1,v^\mp) \\
\le & M \left(  2\norm{e^{s\al} \nabla v^\pm}_{L^2(Q)^d} + (2+M) \norm{e^{s\al}v^\pm}_{L^2(Q)} + \norm{e^{s\al} \nabla v^\mp}_{L^2(Q)^d} + \norm{e^{s\al}v^\mp}_{L^2(Q)} \right). \nonumber
\end{align}
Similarly, using that $\norm{\cS^\pm \cdot A^\pm}_{L^\infty(\Om)} \le 2M \norm{A^\pm}_{L^\infty(\Om)^d}$ since $A_j^\pm \in \cV_M(A_0^\pm)$, $j=1,2$, and that $\norm{\partial_t u_2^\pm}_{L^\infty(Q)}$ and $\norm{\nabla\partial_t u_2^\pm}_{L^\infty(Q)^d}$ are bounded in accordance with \eqref{e00}, we get that
\begin{align*}
& V_{\cS^\pm}(A^\pm, q^\pm,\partial_t u_2^\pm) + T(\Phi,\phi,\partial_t u_2^\mp) \\
\le & C \left( 2 \norm{e^{s\al} A^\pm}_{L^2(Q)^d} + (1+2M) \norm{e^{s\al} \nabla \cdot A^\pm}_{L^2(Q)} + \norm{e^{s\al} q^\pm}_{L^2(Q)} +\norm{e^{s\al} \Phi}_{L^2(Q)^d}
	+\norm{e^{s\al} \phi}_{L^2(Q)}\right).
\end{align*}
Putting this together with \eqref{e7}-\eqref{e8} and using that $\alpha(x,t) \le \alpha_0(x)$ for all $x \in \Om$ and all $t \in (0,T)$, we find that
\begin{align}
\label{e9}
& \norm{e^{s\al} Lv^\pm}_{L^2(Q)}^2 \\
\le & C \left( \sum_{\ka=+,-} \left( \norm{e^{s\al} v^\ka}_{L^2(Q)}^2 + 
	\norm{e^{s\al} \nabla v^\ka}_{L^2(Q)^d}^2 \right) + \mathfrak{h}_s(A^\pm,q^\pm,\Phi,\phi) \right),\ s \in (0,\infty), \nonumber
\end{align}
where
\begin{align}
\label{e10}
& \mathfrak{h}_s(A^\pm,q^\pm,\Phi,\phi) \\
= & \sum_{\ka=+,-} \left( \norm{e^{s\al_0} A^\ka}_{L^2(\Om)^d}^2+ \norm{e^{s\al_0} \nabla \cdot A^\ka}_{L^2(\Om)}^2 + \norm{e^{s\al_0} q^\ka}_{L^2(\Om)}^2 \right) + \norm{e^{s\al_0} \Phi}_{L^2(\Om)^d}^2+ \norm{e^{s\al_0} \phi}_{L^2(\Om)}^2. \nonumber
\end{align}
Next, inserting \eqref{e9} into \eqref{aqzs} we obtain
\begin{align*}
& \sum_{\ka=+,-} \left( \sum_{j=1}^2 \| R_j (e^{s\al} v^\ka) \|_{L^2(Q)}^2 + (s-2C) \|e^{s\al} \na v^\ka \|_{L^2(Q)^d}^2 + (s^3-2C) \|e^{s\al} v^\ka \|_{L^2(Q)}^2  \right) \\
\le & C \left( \sum_{\ka=+,-} \left( \norm{\pa_\nu v^\ka}_{L^2(\Sigma_0)}^2  + s  \cI(v_0^\ka) \right) + \mathfrak{h}_s(A^\pm,q^\pm,\Phi,\phi) \right),\ s \in (s_0,\infty).
\end{align*}
Taking $s_1\in (s_0,\infty)$ so large that, say, $\min(s_1,s_1^3)\geq 3C$, we infer from the above estimate that
\begin{align*}
& \sum_{\ka=+,-} \left( \sum_{j=1}^2 \norm{R_j (e^{s\al} v^\ka)}_{L^2(Q)}^2 + s \norm{e^{s\al}\na v^\ka}_{L^2(Q)^d}^2 + s^3 \norm{e^{s\al} v^\ka}_{L^2(Q)}^2  \right) \\
\le & C \left( \sum_{\ka=+,-} \left( \norm{\pa_\nu v^\ka}_{L^2(\Sigma_0)}^2  + s  \cI(v_0^\ka) \right) + \mathfrak{h}_s(A^\pm,q^\pm,\Phi,\phi) \right),\ s \in (s_1,\infty).
\end{align*}
This and \eqref{eq:lem} then yield that
$$
\sum_{\ka=\pm} \norm{e^{s \al_0} v_0^\ka}_{L^2(\Om)}^2 
\le  C s^{-\frac{3}{2}} \left( \sum_{\ka=+,-} \left( \norm{\pa_\nu v^\ka}_{L^2(\Sigma_0)}^2  + s  \cI(v_0^\ka) \right) + \mathfrak{h}_s(A^\pm,q^\pm,\Phi,\phi) \right),\ s \in (s_1,\infty).
$$
The second term on the right hand side of the above inequality is treated by the following technical estimate, whose proof is postponed to Section \ref{sec-prlm0}.
\begin{lem}
\label{lm0}
For all $s \in (s_1,\infty)$, we have
$$  \sum_{\ka=+,-} \cI(v_0^\ka) \le C \mathfrak{h}_s(A^\pm,q^\pm,\Phi,\phi). $$
\end{lem}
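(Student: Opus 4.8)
The plan is to prove the inequality pointwise in $x$ and then integrate against the weight $e^{2s\al_0}$, the decisive feature being that the admissibility conditions encoded in the classes $\digamma(\cdot)$ allow every first-order derivative of an unknown coefficient to be absorbed back into the coefficients themselves. First I would rewrite the integrand of $\cI(v_0^\ka)$ by means of the identity $\ov{v_0^\ka}\na v_0^\ka - v_0^\ka\na\ov{v_0^\ka} = 2i\,\Im(\ov{v_0^\ka}\na v_0^\ka)$, so that, since $\be \in \cC^4(\ov{\Om})$ yields $\na\be \in L^\infty(\Om)^d$,
\begin{equation*}
\cI(v_0^\ka) \le 2\norm{\na\be}_{L^\infty(\Om)^d}\int_\Om e^{2s\al_0}\abs{v_0^\ka}\,\abs{\na v_0^\ka}\,dx .
\end{equation*}
It therefore suffices to bound $\abs{v_0^\ka}$ and $\abs{\na v_0^\ka}$ pointwise by the moduli of the unknown coefficients, the weight then factoring through with a constant independent of $s$.

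For the zeroth-order factor I would expand $v_0^\ka$ from \eqref{def-v0pm} and \eqref{def-theta}, exhibiting it as a linear combination of $A^\ka$, $\na\cdot A^\ka$, $q^\ka$, $\Phi$ and $\phi$ whose coefficients depend only on $u_0^\pm$, $\na u_0^\pm$ and $\cS^\pm$. Since $u_0^\pm \in H^{2N_d+1}(\Om) \hookrightarrow W^{2,\infty}(\Om)$ and $\norm{\cS^\pm}_{L^\infty(\Om)^d}\le 2M$, this gives
\begin{equation*}
\abs{v_0^\ka(x)} \le C\left(\abs{A^\ka(x)}+\abs{\na\cdot A^\ka(x)}+\abs{q^\ka(x)}+\abs{\Phi(x)}+\abs{\phi(x)}\right),\quad x\in\Om,
\end{equation*}
with $C$ depending only on $M$ and on $\norm{u_0^\pm}_{W^{1,\infty}(\Om)}$.

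The main step, and the only real difficulty, is to secure the same bound for $\abs{\na v_0^\ka}$. Differentiating the expansion of $v_0^\ka$ produces, alongside harmless terms in which the derivative lands on $u_0^\pm$ or on the \emph{known} vector field $\cS^\pm$ (controlled by $\norm{u_0^\pm}_{W^{2,\infty}(\Om)}$ and by $M$), the genuinely dangerous quantities $\na A^\ka$, $\na(\na\cdot A^\ka)$, $\na q^\ka$, $\na\Phi$ and $\na\phi$. This is exactly where the admissibility classes intervene: $(A_1^\ka,A_2^\ka)\in\digamma(\cV_M(A_0^\ka))$ furnishes $\abs{\na(\na\cdot A^\ka)}+\max_i\sum_j\abs{\partial_i A^\ka_j}\le M\bigl(\abs{A^\ka}+\abs{\na\cdot A^\ka}\bigr)$, the divergence-free membership $(\Phi_1,\Phi_2)\in\digamma(\tilde{\cV}_M(\Phi_0))$ gives $\max_i\sum_j\abs{\partial_i\Phi_j}\le M\abs{\Phi}$, while the scalar classes yield $\abs{\na q^\ka}\le M\abs{q^\ka}$ and $\abs{\na\phi}\le M\abs{\phi}$. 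Feeding these back, every derivative of an unknown is dominated by the unknowns already on the right-hand side, whence
\begin{equation*}
\abs{\na v_0^\ka(x)}\le C\left(\abs{A^\ka(x)}+\abs{\na\cdot A^\ka(x)}+\abs{q^\ka(x)}+\abs{\Phi(x)}+\abs{\phi(x)}\right),\quad x\in\Om .
\end{equation*}

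Combining the two pointwise estimates, applying $(\sum_\mu a_\mu)^2\le C\sum_\mu a_\mu^2$ to the resulting product, integrating against $e^{2s\al_0}$ and summing over $\ka=+,-$ then reproduces precisely $\mathfrak{h}_s(A^\pm,q^\pm,\Phi,\phi)$ up to a multiplicative constant independent of $s$, which is the claim. The care required in the third step is purely in the bookkeeping: one must verify that no derivative of a coefficient escapes the reach of the $\digamma$-conditions — in particular that the divergence-free hypothesis \eqref{divfree} suppresses the would-be $\na\cdot\Phi$ contribution, and that differentiating the $\cS^\pm\!\cdot\!A^\pm$ term of $\vartheta_{\cS^\pm}$ only generates quantities still controlled by $\abs{A^\ka}+\abs{\na\cdot A^\ka}$ (using $\cS^\pm\in W^{1,\infty}(\Om)^d$ with norm bounded by $2M$).
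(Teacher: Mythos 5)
Your proposal is correct and follows essentially the same route as the paper: expand $v_0^\ka$ and $\na v_0^\ka$ from \eqref{def-v0pm}--\eqref{def-theta}, absorb every derivative of an unknown coefficient via the pointwise $\digamma$-conditions (and control the harmless terms by $\norm{u_0^\pm}_{W^{2,\infty}(\Om)}$ and $M$), then integrate against $e^{2s\al_0}$. The only cosmetic difference is that you argue pointwise and integrate at the end, whereas the paper organizes the same bounds as weighted $L^2$ estimates (splitting $\na v_0^\pm$ into a derivative-free part $\mathfrak{f}^\pm$ and a derivative-carrying part $\mathfrak{g}^\pm$) and concludes by Cauchy--Schwarz.
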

In light of this, we obtain that
\begin{equation}
\label{e11}
\sum_{\ka=\pm} \norm{e^{s \al_0} v_0^\ka}_{L^2(\Om)}^2
\le C s^{-\frac{3}{2}} \left( \sum_{\ka=+,-} \norm{\pa_\nu v^\ka}_{L^2(\Sigma_0)}^2  + s \mathfrak{h}_s(A^\pm,q^\pm,\Phi,\phi) \right),\ s \in (s_1,\infty).
\end{equation}
With reference to \eqref{def-theta}, we get by substituting the right hand side of \eqref{def-v0pm} for $v_0^\pm$ in \eqref{e11}, that
\begin{align}
\label{e12}
&  \sum_{\ka=+,-} \left( \norm{e^{s \al_0} \left( 2 i A^\ka \cdot \nabla u_0^\ka - \left(  \mathcal{S}^\ka \cdot A^\ka  - i ( \nabla \cdot A^\ka) + q^\ka \right) u_0^\ka -  \left( \ka \Phi \cdot \nabla u_0^{-\ka} + \phi u_0^{-\ka} \right)  \right)}_{L^2(\Om)}^2 \right) \\
\le & C  \left(  \sum_{\ka=+,-} \norm{\pa_\nu v^\ka}_{L^2(\Sigma_0)}^2 + s^{-\frac{1}{2}}  \mathfrak{h}_s(A^\pm,q^\pm,\Phi,\phi) \right),\ s \in (s_1,\infty). \nonumber 
\end{align}
Here $s_1$ was possibly replaced by $\max(s_1,1)$ and the notation $-\ka$ means $\mp$ whenever $\ka=\pm$.

The last step of the proof is to stably reconstruct the unknown coefficients $A^\pm$, $q^\pm$, $\Phi$ and $\phi$ by suitably choosing the initial states $u_0^\pm$ in \eqref{e12}. Otherwise stated we shall probe the system \eqref{sys:ori} with sufficiently many initial states $u_0^k=(u_0^{+,k},u_0^{-,k})$, $k=1,\cdots,3d+2$, and Dirichlet boundary conditions $g^k=(g^{+,k},g^{-,k})$ satisfying the compatibility condition \eqref{d4} with $m=4$, in order to extract the relevant information given by the estimate \eqref{e12} on $A^\pm$, $q^\pm$, $\Phi$ and $\phi$.

\subsection{End of the proof}
 \label{sec-eop}
Let us denote by $u^k=(u^{+,k},u^{-,k})$ the solution to \eqref{sys:ori} with initial state $(u_0^\pm,g^\pm)=(u_0^{\pm,k},g^{\pm,k})$. Set $v^{\pm,k}=\partial_t u^{\pm,k}$ and $\mu_k=\mu_k^+ +\mu_k^-$, where $\mu_k^\pm=\left\|   \pa_\nu v^{\pm,k} \right\|_{L^2(\Sigma_0)}^2$.

We proceed in three steps: \\
\noindent {\it Step 1.}  First we take $u_0^{+,1}=1$, $u_0^{-,1}=0$ and $u_0^{+,2}=0$, $u_0^{-,2}=1$ successively in \eqref{e12}, add the two obtained inequalities, and get that:
\begin{align}
\label{e21}
& \norm{e^{s \al_0} \phi}_{L^2(\Om)}^2 + \sum_{\ka=+,-} \norm{e^{s \al_0} \left(\cS^\ka \cdot A^\ka -i\na \cdot A^\ka +q^\ka\right)}_{L^2(\Om)}^2 \\
& \le  C  \left(s^{-\frac{1}{2}} \mathfrak{h}_s(A^\pm,q^\pm,\Phi,\phi)+\mu_1+\mu_2 \right),\ s \in (s_1,\infty). \nonumber
\end{align}

\noindent {\it Step 2}: Second, we pick $6d$ functions $u_0^{\pm,k+2}:\Om \longrightarrow \R$, $k=1,\cdots,3d$, such that the two matrices $(U_0^\pm)^T U_0^\pm$, where $U_0^\pm=\left(\partial_l u_0^{\pm,k+2}\right)_{1\leq k,l\leq 3d}$ and $(U_0^\pm)^{T}$ is the transpose of $U_0^{\pm}$, are strictly positive definite:
\begin{equation}
\label{e:4}
\exists \upsilon_0^\pm >0,\ \abs{U_0^\pm \xi} \ge \upsilon_0^\pm \abs{\xi},\ \xi \in \Bbb{C}^d.
\end{equation}
Then, for all $k=1,\cdots,3d$, we obtain upon substituting $u_0^{\pm,k+2}$ for $u_0^\pm$ in \eqref{e12} that
\begin{align}
\label{e22}
\sum_{\ka=+,-} \norm{\xi^{\ka,k+2} - i \zeta^{\ka,k+2}}_{L^2(\Om)}^2
& \le  C  \left(s^{-\frac{1}{2}}  \mathfrak{h}_s(A^\pm,q^\pm,\Phi,\phi) + \mu_{k+2}  \right),\ s \in (s_1,\infty),
\end{align}
where
\begin{equation}
\label{e23} 
\xi^{\ka,k+2}=e^{s \al_0}\left(-2A^\pm \cdot \na u_0^{\ka,k+2} + i \ka \Phi \cdot \na u_0^{-\ka,k+2}\right)
\end{equation}
and
\begin{equation}
\label{e24} 
\zeta^{\ka,k+2}= e^{s \al_0} \left( \left( \cS^\ka \cdot A^\ka -i \na \cdot A^\ka + q^\ka \right) u_0^{\ka,k+2} + \phi u_0^{-\ka,k+2} \right).
\end{equation}
Upon using that
\begin{align}
\label{e:4b}
\abs{\xi+\zeta}^2 \ge \f{1}{2} \abs{\xi}^2 - \abs{\zeta}^2,\  \xi, \zeta \in \Bbb{C}^d,
\end{align}
we infer from \eqref{e22} that
\begin{align}
\label{e25}
\sum_{\ka=+,-} \norm{\xi^{\ka,k+2}}_{L^2(\Om)}^2 
& \le  C  \left( s^{-\frac{1}{2}} \mathfrak{h}_s(A^\pm,q^\pm,\Phi,\phi) + \mu_{k+2} \right) + 2 \sum_{\ka=+,-} \norm{\zeta^{\ka,k+2}}_{L^2(\Om)}^2. 
\end{align}
Further, we have
\begin{align}
\label{e26}
\norm{\zeta^{\pm,k+2}}_{L^2(\Om)}^2 & \le C \left(  \norm{e^{s \al_0} \left( \cS^\pm \cdot A^\pm -i \na \cdot A^\pm + q^\pm \right)}_{L^2(\Om)}^2 +  \norm{e^{s \al_0} \phi}_{L^2(\Om)}^2 \right) \\
& \le C \left( s^{-\frac{1}{2}} \mathfrak{h}_s(A^\pm,q^\pm,\Phi,\phi)+\mu_1+\mu_2 \right), \nonumber
\end{align}
by \eqref{e21} and \eqref{e24},  whereas \eqref{e23} yields
$$  \norm{\xi^{\pm,k+2}}_{L^2(\Om)}^2= 4 \norm{e^{s \al_0} A^\pm \cdot \na u_0^{\pm,k+2}}_{L^2(\Om)}^2 +
\norm{e^{s \al_0} \Phi \cdot \na u_0^{\mp,k+2}}_{L^2(\Om)}^2, $$
since $A^\pm$ and $\Phi$ are real-valued. 
Inserting this and \eqref{e26} into \eqref{e25}, we get for all $k=1,\ldots,3d$, 
\begin{align*}
& \sum_{\ka=+,-} \left( \norm{e^{s \al_0} A^\ka \cdot \na u_0^{\ka,k+2}}_{L^2(\Om)}^2 + \norm{e^{s \al_0} \Phi \cdot \na u_0^{\ka,k+2}}_{L^2(\Om)}^2 \right) \\
& \le  C  \left( s^{-\frac{1}{2}} \mathfrak{h}_s(A^\pm,q^\pm,\Phi,\phi) + \mu_1 + \mu_2 + \mu_{k+2}  \right),\ s \in (s_1,\infty).
\end{align*}
Summing up the above estimate over $k=1,\cdots,3d$ and applying \eqref{e:4}, we find that
\begin{align}
\label{e27}
& \sum_{\ka=+,-} \norm{e^{s \al_0} A^\ka}_{L^2(\Om)^d}^2 + \norm{e^{s \al_0} \Phi}_{L^2(\Om)^d}^2 \\
& \le  C  \left( s^{-\frac{1}{2}} \mathfrak{h}_s(A^\pm,q^\pm,\Phi,\phi) + \sum_{k=1}^{3d+2} \mu_k  \right),\ s \in (s_1,\infty). \nonumber 
\end{align}

\noindent {\it Step 3.} 
Last, we combine \eqref{e:4b} where
$\xi=e^{s \al_0}(q^\pm-i \na \cdot  A^\pm)$ and $\zeta=e^{s\al_0} \cS^\pm \cdot A^\pm$, with the estimate $\norm{e^{s\al_0} \cS^\pm \cdot A^\pm}_{L^2(\Om)} \le 2M \norm{e^{s\al_0}A^\pm}_{L^2(\Om)^d}$, we get 
\begin{align*}
& \norm{e^{s\al_0}(q^\pm -i \na \cdot A^\pm)}_{L^2(\Om)}^2 \\
& \le  C \left( \norm{e^{s\al_0} A^\pm}_{L^2(\Om)^d}  + \norm{e^{s\al_0}(\cS^\pm \cdot A^\pm -i \na \cdot A^\pm + q^\pm)}_{L^2(\Om)}^2 \right),\ s \in (s_1,\infty). \nonumber
\end{align*}
This, \eqref{e21}, \eqref{e27} and the following identity
$$\norm{e^{s\al_0}(q^\pm -i \na \cdot  A^\pm)}_{L^2(\Om)}^2 = \norm{e^{s\al_0}q^\pm}_{L^2(\Om)}^2 + \norm{e^{s\al_0} \na \cdot A^\pm}_{L^2(\Om)}^2, $$
arising from the assumption that $A^\pm$ and $q^\pm$ are real-valued, yield
\begin{align*}
\norm{e^{s\al_0}q^\pm}_{L^2(\Om)}^2 + \norm{e^{s\al_0} \na \cdot A^\pm}_{L^2(\Om)}^2
	 \le  C  \left(s^{-\frac{1}{2}} \mathfrak{h}_s(A^\pm,q^\pm,\Phi,\phi) +\sum_{k=1}^{3d+2}\mu_{k}  \right),\ s \in (s_1,\infty).
\end{align*}
Putting this together with \eqref{e10}, \eqref{e21} and \eqref{e27}, we find
$$
\mathfrak{h}_s(A^\pm,q^\pm,\Phi,\phi) \le  C  \left(s^{-\frac{1}{2}} \mathfrak{h}_s(A^\pm,q^\pm,\Phi,\phi) +\sum_{k=1}^{3d+2}\mu_{k}  \right),\ s \in (s_1,\infty),
$$
which entails that
\begin{equation}
\label{e30}
\mathfrak{h}_s(A^\pm,q^\pm,\Phi,\phi) \le  C  \sum_{k=1}^{3d+2}\mu_{k},\ s \in (s_2,\infty),
\end{equation}
where $s_2 \in (s_1,\infty)$ is taken so large that $1- Cs_2^{-\frac{1}{2}}>\frac{1}{2}$.
Finally, recalling \eqref{e21} and using that
$$
e^{s\al_0(x)}=e^{s\ell^{-2}(0)(e^{\la\be(x)}-e^{\la K})}\ge e^{s\ell^{-2}(0)(1-e^{\la K})},\ x \in \Om,\ s \in (0,\infty),
$$
in accordance with \eqref{con:l}-\eqref{con:weight}, we end up getting the desired result directly from \eqref{e30}.

\subsection{Proof of Lemma \ref{lm0}}
\label{sec-prlm0}
With reference to \eqref{def-v0pm} and \eqref{def-V}-\eqref{def-T}, we have
$$ \norm{e^{s \alpha_0} v_0^\pm}_{L^2(\Om)} \le V_{\cS^\pm}(A^\pm,q^\pm,u_0^\pm) + T(\pm \Phi,\phi,u_0^\mp),\ s \in (0,\infty), $$
with
\begin{align*}
& V_{\cS^\pm}(A^\pm,q^\pm,u_0^\pm) \\
\le & 2 \norm{e^{s \alpha_0} A^\pm}_{L^2(\Om)^d} \left( \norm{\na u_0^\pm}_{L^\infty(\Om)^d} + M \norm{u_0^\pm}_{L^\infty(\Om)} \right) + \norm{e^{s \alpha_0} \na \cdot A^\pm}_{L^2(\Om)} \norm{u_0^\pm}_{L^\infty(\Om)} \\
& + \norm{e^{s \alpha_0} q^\pm}_{L^2(\Om)} \norm{u_0^\pm}_{L^\infty(\Om)}  \\
\le & C \norm{u_0^\pm}_{W^{1,\infty}(\Om)} \left( \norm{e^{s \alpha_0} A^\pm}_{L^2(\Om)^d} + 
\norm{e^{s \alpha_0} \na \cdot A^\pm}_{L^2(\Om)} + \norm{e^{s \alpha_0} q^\pm}_{L^2(\Om)} \right)
\end{align*}
and
\begin{align*}
T(\pm \Phi,\phi,u_0^\mp)
\le & \norm{e^{s \alpha_0} \Phi}_{L^2(\Om)^d} \norm{\na u_0^\mp}_{L^\infty(\Om)^d} + \norm{e^{s \alpha_0} \phi}_{L^2(\Om)} \norm{u_0^\mp}_{L^\infty(\Om)} \\
\le & \norm{u_0^\mp}_{W^{1,\infty}(\Om)} \left(  \norm{e^{s \alpha_0} \Phi}_{L^2(\Om)^d} + \norm{e^{s \alpha_0} \phi}_{L^2(\Om)} \right).
\end{align*}
This entails that
\begin{equation}
\label{e100}  
\norm{e^{s \alpha_0} v_0^\pm}_{L^2(\Om)} \le C \left(  \norm{u_0^\pm}_{W^{1,\infty}(\Om)}  +\norm{u_0^\mp}_{W^{1,\infty}(\Om)} \right) \mathfrak{h}_s(A^\pm,q^\pm,\Phi,\phi)^{1 \slash 2},\ s \in (0,\infty).
\end{equation}

Let us denote by $\mathbb{J}_{X}=\left( \partial_i x_j \right)_{1 \le i, j \le n}$ the Jacobian matrix of $X=(x_j)_{1 \le j \le n} \in H^1(\Om,\R^d)$ and by $\mathbb{D}^2_{u_0^\pm}=\left( \partial_{i j}^2 u_0^\pm \right)_{1 \le i, j \le n}$ the Hessian matrix of $u_0^\pm$. Then, with reference to \eqref{def-theta} we get through direct computation that
\begin{align}
\label{e101}
\na \left( \vartheta_{\cS^\pm}(A^\pm,q^\pm) u_0^\pm \right) 
= & 2 i \left( \mathbb{J}_{A^\pm} \na u_0^\pm + \mathbb{D}_{u_0^\pm}^2 A^\pm \right) + \left( i \na \cdot A^\pm - \cS^\pm \cdot A^\pm - q^\pm \right) \na u_0^\pm  \\
& + u_0^\pm \left( i \na (\na \cdot A^\pm) - \mathbb{J}_{\cS^\pm} A^\pm -  \mathbb{J}_{A^\pm} \cS^\pm - \na q^\pm \right)\nonumber
\end{align}
and that
\begin{equation}
\label{e102}
\na \left( \Theta(\pm \Phi,\phi) u_0^\mp \right) 
= \pm \mathbb{J}_{\Phi} \na u_0^\mp \pm \mathbb{D}_{u_0^\mp}^2 \Phi + u_0^\mp \na \phi  + \phi \na u_0^\mp.
\end{equation}
Further since
$$ \na v_0^\pm = i \left( \na \left( \vartheta_{\cS^\pm}(A^\pm,q^\pm) u_0^\pm \right)  - \na \left( \Theta(\pm \Phi,\phi) u_0^\mp \right) \right), $$
according to \eqref{def-v0pm}, we deduce from \eqref{e101}-\eqref{e102} that
\begin{equation}
\label{e103}
\na v_0^\pm = -i \left( \mathfrak{f}^\pm + \mathfrak{g}^\pm \right),
\end{equation}
where
\begin{equation}
\label{e104}
\mathfrak{f}^\pm = -2 i \mathbb{D}_{u_0^\pm}^2 A^\pm + \left( \cS^\pm \cdot A^\pm -i \na \cdot A^\pm + q^\pm \right) \na u_0^\pm + u_0^\pm \mathbb{J}_{\cS^\pm} A^\pm   \pm \mathbb{D}_{u_0^\mp}^2 \Phi + \phi \na u_0^\mp
\end{equation}
and
\begin{equation}
\label{e105}
\mathfrak{g}^\pm = -2 i \mathbb{J}_{A^\pm} \na u_0^\pm + u_0^\pm \left( \mathbb{J}_{A^\pm} \cS^\pm -i \na (\na \cdot A^\pm) + \na q^\pm \right)  \pm \mathbb{J}_{\Phi} \na u_0^\mp + u_0^\mp \na \phi.
\end{equation}
Moreover, we have
\begin{align}
\label{e107}
\norm{e^{s \alpha_0} \mathfrak{f}^\pm}_{L^2(\Om)^d} 
\le & C \left( \norm{u_0^\pm}_{W^{2,\infty}(\Om)} + \norm{u_0^\mp}_{W^{2,\infty}(\Om)} \right) \mathfrak{h}_s(A^\pm,q^\pm,\Phi,\phi)^{1 \slash 2} 
\end{align}
from \eqref{e104}
and
\begin{align*}
& \norm{e^{s \alpha_0} \mathfrak{g}^\pm}_{L^2(\Om)^d} \\
\le & C \left( \norm{u_0^\pm}_{W^{1,\infty}(\Om)} \left( \norm{e^{s \alpha_0} \mathbb{J}_{A^\pm}}_{L^2(\Om)^{n^2}}+ \norm{e^{s \alpha_0}  \na (\na \cdot  A^\pm)}_{L^2(\Om)^d} +
\norm{e^{s \alpha_0} \na q^\pm}_{L^2(\Om)^d} \right) \right. \nonumber \\
& \left. + \norm{u_0^\mp}_{W^{1,\infty}(\Om)} \left( \norm{e^{s \alpha_0} \mathbb{J}_{\Phi}}_{L^2(\Om)^{n^2}} + \norm{e^{s \alpha_0} \na \phi}_{L^2(\Om)^d} \right)\right)
\end{align*}
from \eqref{e105}. Using that
$$ \abs{\nabla (\nabla \cdot A^\pm)(x)} + \max_{i=1,\ldots,d} \sum_{j=1}^d \left( \abs{\partial_i A_j^\pm(x)} + \abs{\partial_i \Phi_j^\pm(x)} \right)\le M 
\left(\abs{\nabla.A^\pm(x)}+ \abs{A^\pm(x)} + \abs{\Phi^\pm(x)} \right),\ x \in \Om $$
and
$$ \abs{\nabla q^\pm(x)}  + \abs{\nabla\phi^\pm(x)} \le M \left( \abs{q^\pm(x)} + \abs{\phi^\pm(x)} \right),\ x \in \Om, $$ 
by assumption, we infer from the above estimate that
\begin{align}
\label{e108}
\norm{e^{s \alpha_0} \mathfrak{g}^\pm}_{L^2(\Om)^d} 
\le & C \left( \norm{u_0^\pm}_{W^{1,\infty}(\Om)} + \norm{u_0^\mp}_{W^{1,\infty}(\Om)} \right) \mathfrak{h}_s(A^\pm,q^\pm,\Phi,\phi)^{1 \slash 2}.
\end{align}

Finally, keeping in mind that
$$
\ov{v_0^\pm} \nabla v_0^\pm -v_0^\pm \nabla \ov{v_0^\pm}=  2 i \left( \Im ( v_0^\pm \ov{\mathfrak{f}^\pm}) + \Im (v_0^\pm \ov{\mathfrak{g}^\pm}) \right),
$$
according to \eqref{e103}, the desired result follows from \eqref{defI} and the estimates \eqref{e100} and \eqref{e107}-\eqref{e108}.

\appendix

\section{The magnetic Dirichlet Laplacian}
For $A \in L^{\infty}(\Om,\R^d)$, we consider the sesquilinear form
$$ a(u,v) = \int_{\Om} \na_A u(x) \cdot \overline{\na_A v(x)} dx,\ u,v \in H_0^1(\Om), $$
where we recall that $\nabla_A$ stands for the magnetic gradient operator $\nabla + i A$, and we denote by $-\Delta_A^D$ the linear operator generated by $a$ in $L^2(\Om)$.

\begin{lem}
\label{lm-mdl}
The magnetic Dirichlet Laplacian $-\Delta_A^D$ is self-adjoint in $L^2(\Om)$. Moreover, when $A \in W^{1,\infty}(\Omega)$ the operator $-\Delta_A^D$ acts as $-\Delta_A$, defined in \eqref{def-deltaA}, on its domain
$$ D(-\Delta_A^D)=H_0^1(\Om) \cap H^2(\Om). $$
\end{lem}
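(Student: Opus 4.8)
The plan is to realize $-\De_A^D$ as the self-adjoint operator canonically generated by the form $a$, and then to pin down its domain by elliptic regularity. For the first assertion I would verify that $a$ is a densely defined, symmetric, nonnegative, bounded and coercive sesquilinear form on $V=H_0^1(\Om)$, and hence closed, so that the first representation theorem applies. Density is clear. Symmetry follows from $\ov{a(v,u)}=\int_\Om \ov{\na_A v}\cdot\na_A u\,dx=a(u,v)$, using that $A$ is real-valued, and nonnegativity from $a(u,u)=\norm{\na_A u}_{L^2(\Om)^d}^2\ge 0$. Boundedness and coercivity on $V$ reduce to the elementary bound $\norm{\na u}_{L^2(\Om)^d}\le \norm{\na_A u}_{L^2(\Om)^d}+\norm{A}_{L^\infty(\Om)^d}\norm{u}_{L^2(\Om)}$ and its converse, which together show that $a(u,u)+\la_0\norm{u}_{L^2(\Om)}^2$ is equivalent to $\norm{u}_{H^1(\Om)}^2$ for a suitable $\la_0>0$. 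Since $V$ is complete for the $H^1$-norm, $a$ is closed, and the representation theorem for closed semibounded symmetric forms (see, e.g., \cite{MRBS}) furnishes a unique self-adjoint operator $-\De_A^D$ with $a(u,v)=(-\De_A^D u,v)_{L^2(\Om)}$ for all $u\in D(-\De_A^D)$ and $v\in V$.

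Next, assuming $A\in W^{1,\infty}(\Om)$, I would establish the inclusion $H_0^1(\Om)\cap H^2(\Om)\subset D(-\De_A^D)$ by integration by parts. Fixing $u$ in the former space and setting $w=\na_A u$, one checks that $w\in H^1(\Om)^d$ precisely because $\na u\in H^1(\Om)^d$ (as $u\in H^2(\Om)$) and $Au\in H^1(\Om)^d$ (as $A\in W^{1,\infty}(\Om)$ and $u\in H^1(\Om)$). Using that $v\in H_0^1(\Om)$ has vanishing trace, this gives $a(u,v)=\int_\Om w\cdot(\na\ov v-iA\ov v)\,dx=-\int_\Om \big((\na+iA)\cdot w\big)\,\ov v\,dx=(-\De_A u,v)_{L^2(\Om)}$ for every $v\in V$, since $(\na+iA)\cdot w=\De_A u$. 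By the characterization of $D(-\De_A^D)$ recalled above, this shows $u\in D(-\De_A^D)$ with $-\De_A^D u=-\De_A u$.

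The converse inclusion is the crux. Given $u\in D(-\De_A^D)$, so that $a(u,v)=(f,v)_{L^2(\Om)}$ for some $f\in L^2(\Om)$ and all $v\in V$, I would expand $\na_A u\cdot\ov{\na_A v}=\na u\cdot\na\ov v-i(A\cdot\na u)\ov v+iu\,(A\cdot\na\ov v)+\abs{A}^2 u\ov v$. The zero-order term and the term $-i(A\cdot\na u)\ov v$ already define bounded antilinear functionals of $v$ in $L^2(\Om)$, while the remaining term $iu\,(A\cdot\na\ov v)$ is moved onto $u$ by integration by parts; this is licit because $\na\cdot(uA)=A\cdot\na u+u(\na\cdot A)\in L^2(\Om)$ thanks to $A\in W^{1,\infty}(\Om)$, and the boundary contribution vanishes as $v\in H_0^1(\Om)$. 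Collecting terms recasts the identity as $\int_\Om \na u\cdot\na\ov v\,dx=(h,v)_{L^2(\Om)}$ for all $v\in V$, with $h\in L^2(\Om)$; that is, $u$ is a weak $H_0^1$-solution of $-\De u=h$.

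At this point the main obstacle is reduced to pure $H^2$ elliptic regularity for the homogeneous Dirichlet Laplacian, which I expect to be the delicate ingredient since it requires the boundary $\Ga$ to be regular enough (at least $C^{1,1}$, as is the case under the $C^2$ hypothesis in force) and yields $u\in H^2(\Om)$. Combined with the inclusion of the previous paragraph, this gives $D(-\De_A^D)=H_0^1(\Om)\cap H^2(\Om)$ together with the stated action $-\De_A^D u=-\De_A u$, completing the proof.
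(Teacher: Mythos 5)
Your proof is correct and follows essentially the same route as the paper: both realize $-\Delta_A^D$ via the densely defined, symmetric, continuous and $H^1(\Om)$-elliptic form $a$ on $H_0^1(\Om)$, and then identify the domain as $H_0^1(\Om)\cap H^2(\Om)$ through $H^2$ elliptic regularity for the Dirichlet problem (available since $\Gamma$ is $C^2$ in the setting where the lemma is applied). The only difference is that you carry out the two domain inclusions and the regularity step explicitly, whereas the paper delegates this to a citation of Cazenave.
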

\begin{proof}
For all $u$ and $v$ in $H_0^1(\Om$, we have
$$ \abs{a(u,v)} \le 2(1+\norm{A}_{L^\infty(\Om)^d}^2)\norm{u}_{H^1(\Om)} 
\norm{v}_{H^1(\Om)},$$
hence $a$ is continuous on $H_0^1(\Om)$. Further, for all $u \in H_0^1(\Om)$ such that
$v \mapsto a(u,v)$ is continuous in $H_0^1(\Om)$ for the usual topology of $L^2(\Om)$, 
we set $-\Delta_A^D u = f_u$, where $f_u$ is the unique vector in $L^2(\Om)$ given by the Riesz representation theorem, such that
$$ a(u,v)=\langle f_u , v \rangle_{L^2(\Om)},\ v \in H_0^1(\Om). $$

Moreover, since
\begin{align*}
\norm{\nabla_A u}_{L^2(\Om)^d}^2 & \ge \norm{\nabla u}_{L^2(\Om)^d}^2
+ \norm{A u}_{L^2(\Om)^d}^2 - 2 \norm{\nabla u}_{L^2(\Om)^d} \norm{A u}_{L^2(\Om)^d} \\
& \ge \frac{\norm{\nabla u}_{L^2(\Om)^d}^2}{2} - \norm{A u}_{L^2(\Om)^d}^2,\ u \in H_0^1(\Om),
\end{align*}
the sesquilinear form $a$ is $H^1(\Om)$-elliptic with respect to $L^2(\Om)$, in the sense that we have
$$ 
a(u,u) + \lambda \norm{u}_{L^2(\Om)}^2 \ge \frac{1}{2} \norm{u}_{H^1(\Om)}^2,\ u \in H_0^1(\Om),
$$
with $\lambda=\norm{A}_{L^\infty(\Om)^d}^2 + \frac{1}{2}$. Therefore, $-\Delta_A^D$ is densely defined in $L^2(\Omega)$. Finally, since $\Gamma$ is $C^2$ the domain of $-\Delta_A^D$ is $H_0^1(\Om) \cap H^2(\Om)$, see e.g. \cite[Section 2]{C}.
\end{proof}

\section{Relatively bounded perturbation}
The following technical result establishes that first order differential operators are relatively bounded with respect to the magnetic Dirichlet Laplacian.

\begin{lem}
\label{lm-rbp}
Let $A \in L^\infty(\Om,\R^d)$. Then for all $\Phi \in L^\infty(\Om,\R^d)$, the operator $\Phi \cdot \nabla$ is $\Delta_A^D$-bounded with relative bound zero.
\end{lem}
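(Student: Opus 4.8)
The plan is to reduce the statement to a standard form estimate followed by an application of Young's inequality. Recall that $\Phi\cdot\na$ being $\Delta_A^D$-bounded with relative bound zero means $D(\Delta_A^D)\subset D(\Phi\cdot\na)$ and that for every $\ep>0$ there is $C_\ep>0$ with $\norm{\Phi\cdot\na u}_{L^2(\Om)}\le \ep\norm{\Delta_A^D u}_{L^2(\Om)}+C_\ep\norm{u}_{L^2(\Om)}$ for all $u\in D(\Delta_A^D)$. First I would dispose of $\Phi$: since $\Phi\in L^\infty(\Om,\R^d)$, the pointwise Cauchy--Schwarz bound gives $\norm{\Phi\cdot\na u}_{L^2(\Om)}\le\norm{\Phi}_{L^\infty(\Om)^d}\norm{\na u}_{L^2(\Om)^d}$ for every $u\in H^1(\Om)$. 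By Lemma \ref{lm-mdl} we have $D(\Delta_A^D)=H_0^1(\Om)\cap H^2(\Om)\subset H^1(\Om)$, so $\Phi\cdot\na$ is indeed defined on $D(\Delta_A^D)$, and it remains only to control $\norm{\na u}_{L^2(\Om)^d}$ by $\norm{\Delta_A^D u}_{L^2(\Om)}$ and $\norm{u}_{L^2(\Om)}$ with an arbitrarily small coefficient on the first term.

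Next I would pass through the magnetic gradient. Writing $\na u=\na_A u-iAu$ and using the triangle inequality together with $A\in L^\infty(\Om,\R^d)$ yields $\norm{\na u}_{L^2(\Om)^d}\le\norm{\na_A u}_{L^2(\Om)^d}+\norm{A}_{L^\infty(\Om)^d}\norm{u}_{L^2(\Om)}$. The magnetic-gradient term is then handled by the form identity underlying the construction of the Dirichlet magnetic Laplacian in Appendix A: for $u\in D(\Delta_A^D)$, taking $v=u$ in the defining relation $a(u,v)=\langle -\Delta_A^D u,v\rangle_{L^2(\Om)}$ gives $\norm{\na_A u}_{L^2(\Om)^d}^2=a(u,u)=\langle -\Delta_A^D u,u\rangle_{L^2(\Om)}\le\norm{\Delta_A^D u}_{L^2(\Om)}\norm{u}_{L^2(\Om)}$, where the last step is Cauchy--Schwarz and uses $\norm{-\Delta_A^D u}_{L^2(\Om)}=\norm{\Delta_A^D u}_{L^2(\Om)}$.

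Finally I would insert the elementary inequality $\sqrt{ab}\le\frac{\de}{2}a+\frac{1}{2\de}b$, valid for $a,b\ge 0$ and any $\de>0$, with $a=\norm{\Delta_A^D u}_{L^2(\Om)}$ and $b=\norm{u}_{L^2(\Om)}$. This gives $\norm{\na_A u}_{L^2(\Om)^d}\le\frac{\de}{2}\norm{\Delta_A^D u}_{L^2(\Om)}+\frac{1}{2\de}\norm{u}_{L^2(\Om)}$. Combining the three estimates and, when $\Phi\neq 0$, choosing $\de$ so that $\norm{\Phi}_{L^\infty(\Om)^d}\frac{\de}{2}=\ep$ produces, for any prescribed $\ep>0$, a constant $C_\ep$ depending only on $\ep$, $\norm{\Phi}_{L^\infty(\Om)^d}$ and $\norm{A}_{L^\infty(\Om)^d}$ such that $\norm{\Phi\cdot\na u}_{L^2(\Om)}\le\ep\norm{\Delta_A^D u}_{L^2(\Om)}+C_\ep\norm{u}_{L^2(\Om)}$, which is precisely the claim; the case $\Phi=0$ is trivial.

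Since every step is a direct inequality, I do not expect a genuine obstacle here: the only points requiring care are the domain inclusion $D(\Delta_A^D)\subset D(\Phi\cdot\na)$ and the quadratic-form identity $\norm{\na_A u}_{L^2(\Om)^d}^2=\langle -\Delta_A^D u,u\rangle_{L^2(\Om)}$, both of which follow immediately from Lemma \ref{lm-mdl} and the construction in Appendix A. The mild subtlety is simply that the relative-bound-zero conclusion requires the coefficient of $\norm{\Delta_A^D u}_{L^2(\Om)}$ to be made arbitrarily small, which is exactly what the free parameter $\de$ in Young's inequality supplies.
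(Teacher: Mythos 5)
Your proposal is correct and follows essentially the same route as the paper: both arguments replace $\na$ by the magnetic gradient $\na_A$ up to a bounded zeroth-order term, invoke the quadratic-form identity $\norm{\na_A u}_{L^2(\Om)^d}^2=\langle -\Delta_A^D u,u\rangle_{L^2(\Om)}$, and conclude via Cauchy--Schwarz and Young's inequality with a free parameter. The only differences are cosmetic (you peel off $\Phi$ before introducing $\na_A$ and work with unsquared norms, whereas the paper writes $\Phi\cdot\na u=\Phi\cdot\na_A u-i(\Phi\cdot A)u$ and squares everything), so no further comment is needed.
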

\begin{proof}
Using that
$$ \Phi \cdot \nabla u = \Phi \cdot \nabla_A u - i (\Phi \cdot A) u,\ u \in H^1(\Om), $$
we get for all $u \in H_0^1(\Om) \cap H^2(\Om)$ that
\begin{align*}
\norm{\Phi \cdot \nabla u}_{L^2(\Om)}^2 & \le 2 \left( \norm{\Phi}_{L^\infty(\Om)^d}^2 \norm{\nabla_A u}_{L^2(\Om)^d}^2 + \norm{\Phi \cdot A}_{L^\infty(\Om)}^2 \norm{u}_{L^2(\Om)}^2 \right) \\
& \le 2 \left( \norm{\Phi}_{L^\infty(\Om)^d}^2 \langle -\Delta_A^D u , u \rangle_{L^2(\Om)} + \norm{\Phi}_{L^\infty(\Om)^d}^2  \norm{A}_{L^\infty(\Om)^d}^2 \norm{u}_{L^2(\Om)}^2 \right),
\end{align*}
where $\langle \cdot , \cdot \rangle_{L^2(\Om)}$ denotes the usual scalar product in $L^2(\Om)$. 
Therefore, by applying successively the Cauchy-Schwarz and Young inequalities, we find that
\begin{align*}
\norm{\Phi \cdot \nabla u}_{L^2(\Om)}^2
& \le 2 \left( \norm{\Phi}_{L^\infty(\Om)^d}^2 \norm{\Delta_A^D u}_{L^2(\Om)} \norm{u}_{L^2(\Om)} + \norm{\Phi}_{L^\infty(\Om)^d}^2  \norm{A}_{L^\infty(\Om)^d}^2 \norm{u}_{L^2(\Om)}^2 \right) \\
& \le \epsilon \norm{\Delta_A^D u}_{L^2(\Om)}^2 +  C_\epsilon \norm{u}_{L^2(\Om)}^2,\ \epsilon \in (0,1),
\end{align*}
where $C_\epsilon=\norm{\Phi}_{L^\infty(\Om)^d}^2 \left( \epsilon^{-1} \norm{\Phi}_{L^\infty(\Om)^d}^2 + 2  \norm{A}_{L^\infty(\Om)^d}^2  \right)$,
which entails the result.
\end{proof}

\vspace*{1cm}
\end{document}